\tikzset{
    >=stealth,
    every picture/.style={thick},
    graphs/every graph/.style={empty nodes},
}
\tikzstyle{vertex}=[
\tikzstyle{printersafe}=[decoration={snake,amplitude=0pt}]
\newcommand{\pp}{\mathbb{P}}
\renewcommand{\qq}{\mathbb{Q}}
\newcommand{\zz}{\mathbb{Z}}
\newcommand{\nn}{\mathbb{N}}
\newtheorem{introthm}{Theorem}
\newtheorem{introcor}{Corollary}
\newtheorem{theorem}{Theorem}[section]
\newtheorem{lemma}[theorem]{Lemma}
\newtheorem{proposition}[theorem]{Proposition}
\newtheorem{corollary}[theorem]{Corollary}
\newtheorem{notation}[theorem]{Notation}
\newtheorem{definition}[theorem]{Definition}
\newtheorem{example}[theorem]{Example}
\newtheorem{remark}[theorem]{Remark}
\theoremstyle{remark}
\numberwithin{equation}{section}
\begin{document}

\title[Boudnedness of cone singularities]{A boundedness theorem for cone singularities}

\author[J.~Moraga]{Joaqu\'in Moraga}
\address{
Department of Mathematics, University of Utah, 155 S 1400 E, JWB 321,
Salt Lake City, UT 84112, USA}
\email{moraga@math.utah.edu}

\subjclass[2010]{Primary 14E30, 
Secondary 14M25.}
\maketitle

\begin{abstract}
A cone singularity is a normal affine variety $X$ with an effective one-dimensional torus action with a unique fixed point $x\in X$ which lies in the closure of any orbit of the $k^*$-action. In this article, we prove a boundedness theorem for cone singularities in terms of their dimension, singularities, and isotropies.
Given $d$ and $N$ two positive integers and $\epsilon$ a positive real number, we prove that the class 
of $d$-dimensional $\epsilon$-log canonical cone singularities with isotropies bounded by $N$ forms a bounded family.
\end{abstract}

\tableofcontents

\section{Introduction}

In algebraic geometry we are mostly interested on smooth varieties.
However, singular varieties appear naturally when studying smooth objects.
For instance, in Mori's theory singularities appear when running a minimal model program (see, e.g.~\cite{KM98,HK10}),
and singularities also appear on the Gromov-Hausdorff limit of a sequence of K\"ahler-Einstein manifolds (see, e.g.~\cite{DS14,DS16}).
Since the introduction of the minimal model program its been clear that certain classes of singularities are of special importance for birational geometers~\cite{Kol13}.
The development of projective geometry has been tangled with the understanding of the theory of singularities~\cite{Xu17}.
Indeed, Kawamata log terminal singularities, which are the main class of singularities on the MMP, are a local analogue of Fano varieties.
Unfortunately, a complete characterization of klt singularities in dimensions greater or equal than four seems to be unfeasable~\cite{Kol11}.
However, it is expected that the boundedness of Fano varieties due to Birkar~\cite{Bir16a,Bir16b} 
will have a vast number of applications to the understanding of klt singularites.
In this article, we investigate an application of such boundedness result to the study of the so-called cone singularities (see, e.g.~\cite{LS13}).
Cone singularities appear naturally in many contexts of algebraic geometry:
Toric geometry~\cite{CLS11,Ful93}, $\mathbb{T}$-varieties~\cite{AH06,AHS08,AIPSV12}, terminal $3$-fold singularities~\cite{Hay05a,Hay05b,Tzi05}, stability theory of klt singularities~\cite{LX16,LX17,Li17}, the graded ring of a valuation~\cite{Tei99}, 
Gromov-Hausdorff limits~\cite{DS14,DS16}, and Cox rings of Fano varieties~\cite{Bro13,GOST15}, among many others.
A cone over a projective variety $Y$ is a local version of such variety, and the global geometry of $Y$ is reflected on the singularity at the vertex of the cone.
For instance, the cone singularity is Kawamata log terminal if and only if the corresponding normalized Chow quotient $Y$ is of Fano type.
Henceforth, it is expected that the boundedness of Fano varieties implies the boundedness of some klt cone singularities,
and more generally the boundedness of certain klt singularities.
In this article, we give a first step in this direction proving that some natural class of klt cone singularities forms a bounded family.

A cone singularity is a normal affine variety $X$ with an effective one-dimensional torus action with a unique fixed point $x\in X$ which lies in the closure of any orbit of the $k^*$-action.
The fixed point for the torus action is often called the vertex of the cone singularity.
We say that the cone singularity has isotropies bounded by $N$
if for every point of the cone singularity the corresponding isotropy group is either $k^*$ or a finite group of order less than or equal to $N$.
By definition, the only point at which the isotropy group is $k^*$ is the vertex for the torus action.

In order to obtain bounded families of Fano varieties~\cite[Theorem 1.1]{Bir16b},
it is necessary to impose that such projective varieties have mild singularities~\cite{Bir17}.
It does not suffice to assume that the Fano varieties are Kawamata log terminal; 
it is indeed necessary to bound the log discrepancies away from zero.
This forces us to work with the class of $\epsilon$-log canonical singularities for some positive real number $\epsilon$.
Analogously, in order to show that a class of cone singularities is bounded, it is necessary to impose that they have $\epsilon$-log canonical singularities around the vertex.
We prove that a bound on the dimension, singularities, and isotropies are necessary and sufficient to obtain a bounded class of cone singularities:

\begin{introthm}\label{boundedness-cone-singularities}
Let $d$ and $N$ be positive integers and $\epsilon$ a positive real number.
The class of $d$-dimensional $\epsilon$-log canonical cone singularities with isotropies bounded by $N$ forms a bounded family.
\end{introthm}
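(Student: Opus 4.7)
My plan is to reduce Theorem~1 to Birkar's BAB theorem via the Chow quotient for the $k^*$-action, following the philosophy articulated in the introduction. Given a $d$-dimensional cone singularity $X$ with vertex $x$, $\epsilon$-log canonical singularities, and isotropies bounded by $N$, the first step is to form the normalized Chow quotient $\pi\colon X\setminus\{x\}\to Y$, producing a projective variety $Y$ of dimension $d-1$ and realizing $\pi$ as a Seifert $k^*$-bundle. The isotropy data is repackaged as an orbifold boundary $\Delta=\sum(1-1/m_i)D_i$ on $Y$ with every $m_i\le N$, and $X$ is reconstructed as $\operatorname{Spec}\bigoplus_{m\ge 0}H^0(Y,\lfloor mL\rfloor)$ for a unique ample $\qq$-Cartier divisor $L$ with $NL$ Cartier.

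The second step is to translate the $\epsilon$-log canonical hypothesis on $X$ at the vertex into the statement that $(Y,\Delta)$ is an $\epsilon$-log canonical Fano pair. Via the adjunction formula for Seifert $k^*$-bundles (in the style of Koll\'ar), $X$ is $\qq$-Gorenstein if and only if $-(K_Y+\Delta)\sim_\qq rL$ for some rational $r>0$, and the log discrepancy at $x$ along the exceptional divisor of the orbifold blowup of the vertex is exactly $r$. Hence $\epsilon$-log canonicity at the vertex gives both $(Y,\Delta)$ $\epsilon$-log canonical and $r\ge\epsilon$, so that $(Y,\Delta)$ is a $(d-1)$-dimensional $\epsilon$-log canonical Fano pair with boundary coefficients in the finite set $\{1-1/m:1\le m\le N\}$. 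Birkar's BAB theorem \cite{Bir16b} then implies that the pairs $(Y,\Delta)$ which can arise form a bounded family $\mathcal{Y}\to T$.

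The final step is to bound $L$ and reconstruct $X$ in families. Fixing a relatively very ample Cartier divisor $H$ on $\mathcal{Y}/T$, the anticanonical degree $(-(K_Y+\Delta))\cdot H^{d-2}$ is uniformly bounded over $T$; combined with the inequality $L\cdot H^{d-2}\le\tfrac{1}{\epsilon}(-(K_Y+\Delta))\cdot H^{d-2}$ and the Cartier-index bound $NL\in\Pic(Y)$, the polarization $L$ lies in finitely many numerical classes on each fiber, producing a bounded family of polarized orbifold triples $(Y,\Delta,L)$. Taking the relative $\operatorname{Spec}$ of the graded section ring $\bigoplus_{m\ge 0}H^0(Y,\lfloor mL\rfloor)$ over this family yields a bounded family of cone singularities realizing every member of our original class.

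I expect the main obstacle to be Step 2: making the Seifert adjunction formula precise in the presence of a non-trivial orbifold boundary $\Delta$, and verifying that $\epsilon$-log canonicity at the vertex translates cleanly to $r\ge\epsilon$ jointly with $(Y,\Delta)$ being $\epsilon$-log canonical. A secondary technical point is uniform finite generation and Castelnuovo--Mumford-type vanishing of the section rings in the relative setting, needed to make the reconstruction run in families rather than fiber-by-fiber; this should follow from the boundedness of $(Y,\Delta,L)$ together with relative Kawamata--Viehweg vanishing in characteristic zero, after passing to a sufficiently divisible Veronese subring.
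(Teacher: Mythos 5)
Your overall strategy coincides with the paper's: pass to the normalized Chow quotient, package the isotropies into a standard boundary $\Delta$, invoke Birkar's theorem to bound the log Fano pairs, then bound the polarizing divisor and reconstruct the cone as a relative $\Spec$. Two points, one minor and one serious.

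The minor point: your Step 2 claims $(Y,\Delta)$ is $\epsilon$-log canonical. This is off by a factor of $N$. The log discrepancy formula relating the cone to its quotient is $a_{E_X}(K_X)=W_E(D)\,a_E(K_Y+\Delta)$, where $W_E(D)$ is the Weil index of $D$ at $E$; since $W_E(D)$ can be as large as the Cartier index of $D$, i.e.\ as large as $N$, one only gets that $(Y,\Delta)$ is $\tfrac{\epsilon}{N}$-log canonical. This does not damage the argument, since any fixed positive lower bound suffices for Birkar's theorem, but the translation is not the clean equivalence you assert.

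The serious gap is in your final step, where you pass from ``$L$ lies in finitely many numerical classes on each fiber'' to ``a bounded family of polarized triples $(Y,\Delta,L)$'' and hence bounded cones. The isomorphism type of the cone $\Spec\bigl(\bigoplus_{m\ge 0}H^0(Y,\lfloor mL\rfloor)\bigr)$ is an invariant of the \emph{linear} equivalence class of $L$ as a Weil divisor, not of its numerical or $\qq$-linear equivalence class: two divisors differing by a torsion element of the divisor class group are numerically and $\qq$-linearly equivalent yet produce non-isomorphic section rings (this is already visible for cyclic quotient singularities realized as cones). So finiteness of numerical classes does not bound the cones, and one must additionally control the integral Weil divisor class of $L$ and spread these classes out over the base $T$ compatibly with the identification $N^1(\mathcal{X}/T)\simeq N^1(\mathcal{X}_t)$. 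This is precisely the content of Steps 4--6 of the paper's proof, which after fixing the finitely many $\qq$-linear classes uses the finite generation of the relative Cox ring of the Fano-type family to choose a finite basis of effective Weil divisor classes, decomposes each $\mathcal{D}_i$ into a bounded fractional part plus an integral part written in that basis with residues bounded by the global Cartier multiple $k$, and then upgrades $\qq$-linear equivalence of the Cartier parts to genuine linear equivalence over $T$. Your proposal identifies ``uniform finite generation of the section rings'' as a secondary technical point, but the real obstacle is this torsion/linear-equivalence issue, and without an argument for it the reconstruction step does not go through.
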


In subsection~\ref{subsection: examples}, we will give examples where the statement of the theorem fails
if we weaken the conditions on $d,N$ or $\epsilon$.
It is then expected that many algebraic invariants take finitely many possible values on the class of singularities considered in the main theorem.
We remark two invariants which may be of particular interest.

The minimal log discrepancy is an invariant defined to measure the singularities of an algebraic variety (see, e.g.~\cite{Amb99,Mor18}).
Its importance relies on its connection with termination of flips~\cite{Sho04}.
In this direction, we prove the finiteness of minimal log discrepancies of the aforementioned cone singularities.

\begin{introcor}\label{mld}
Let $d$ and $N$ be positive integers and $\epsilon$ a positive real number.
The set of minimal log discrepancies of $d$-dimensional $\epsilon$-log canonical cone singularities with isotropies bounded by $N$ forms a finite set.
\end{introcor}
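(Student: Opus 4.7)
The plan is to deduce Corollary~\ref{mld} from the boundedness statement of Theorem~\ref{boundedness-cone-singularities} by showing that the minimal log discrepancy is a constructible function on the parameter space of any bounded family of pointed klt singularities; since a constructible $\mathbb{R}$-valued function on a Noetherian scheme of finite type takes only finitely many values, this yields the desired finiteness.

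First I would invoke Theorem~\ref{boundedness-cone-singularities} to produce finitely many flat families $\pi_i\colon \mathcal{X}_i \to T_i$ with each $T_i$ of finite type, such that every cone singularity in the class is isomorphic, as a pointed variety, to some fiber of some $\pi_i$ together with its vertex. Because the vertex is the unique torus-fixed point in each cone, after stratifying and shrinking the base one can arrange sections $\sigma_i\colon T_i \to \mathcal{X}_i$ marking the vertex in each fiber. I would then show by Noetherian induction on $T_i$ that the function $t \mapsto \operatorname{mld}(\pi_i^{-1}(t), \sigma_i(t))$ is constructible: over an open dense subset of each stratum one constructs a simultaneous log resolution of the total space, and the log discrepancies of the exceptional divisors that dominate the stratum are then constant, being determined by the combinatorial data of the resolution. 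Divisors whose centers behave exceptionally only over certain loci of the base contribute in lower-dimensional strata, where one iterates the argument.

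The main technical obstacle is guaranteeing that the mld at the vertex of a fiber is genuinely computed by the divisors appearing in such a simultaneous log resolution, rather than by some divisorial valuation that jumps in over special fibers. Here the $\epsilon$-log canonicity hypothesis is essential: it bounds all relevant log discrepancies uniformly away from zero, reducing the computation of the mld on each stratum to a finite, uniformly bounded list of candidate divisors. With this in place the Noetherian stratification terminates after finitely many steps, the mld function attains only finitely many values on each $T_i$, and hence the set of minimal log discrepancies across the whole class is finite.
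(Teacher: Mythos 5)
Your proposal is correct and follows essentially the same route as the paper: deduce the corollary from Theorem~\ref{boundedness-cone-singularities} together with the finiteness of minimal log discrepancies on bounded families. The only difference is that the paper simply cites~\cite[\S 2]{Amb99} for that finiteness, whereas you sketch the standard proof of it (stratification, simultaneous log resolution, Noetherian induction), which is a sound but unnecessary elaboration here.
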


On the other hand, Chenyang Xu proved the finiteness of the algebraic fundamental group of a klt singularity~\cite{Xu14}.
This result is related to the finiteness of the fundamental group of the smooth locus of log Fano pairs~\cite{TX17}.
In this direction, we prove the existence of a bound for the order of such groups for certain cone singularities.

\begin{introcor}\label{alg fun grp}
Let $d$ and $N$ be positive integers and $\epsilon$ a positive real number.
The possible orders of the algebraic fundamental group of $d$-dimensional $\epsilon$-log canonical cone singularities with isotropies bounded by $N$ form a finite set.
\end{introcor}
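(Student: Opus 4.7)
The plan is to deduce Corollary 3 directly from the boundedness statement of Theorem 1 together with Xu's finiteness theorem for the regional algebraic fundamental group of klt singularities. By Theorem 1, the class $\mathcal{F}(d,N,\epsilon)$ of $d$-dimensional $\epsilon$-log canonical cone singularities with isotropies bounded by $N$ forms a bounded family, so there exists a morphism of finite type $\mathcal{X}\to T$ together with a section $\sigma\colon T\to \mathcal{X}$ such that every $(X,x)\in\mathcal{F}(d,N,\epsilon)$ arises (up to isomorphism of analytic germs) as a pointed fiber $(\mathcal{X}_t,\sigma(t))$. Replacing $T$ by a disjoint union of its irreducible components and then by locally closed subschemes, we may assume $T$ is irreducible, reduced, and that $\mathcal{X}\to T$ is flat with geometrically integral fibers.

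The next step is to argue that the function $t\mapsto |\pi_1^{\mathrm{alg}}(\mathcal{X}_t,\sigma(t))|$ is constructible. By Xu's theorem, each value is a finite positive integer, so the real content is uniform boundedness on locally closed strata. Over the generic point $\eta$ of $T$, Xu's theorem provides a maximal quasi-\'etale cover $Y_\eta\to \mathcal{X}_\eta$ of the germ at $\sigma(\eta)$, of finite degree $m_\eta=|\pi_1^{\mathrm{alg}}(\mathcal{X}_\eta,\sigma(\eta))|$. After a finite base change $T'\to T$ (necessary to realize the cover) and possibly shrinking, this cover spreads out to a quasi-\'etale morphism $\mathcal{Y}\to \mathcal{X}\times_T T'$ of germs along the vertex section, of constant degree $m_\eta$ on fibers. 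By specialization of the algebraic fundamental group (applied to the pulled-back family, whose fibers are still $\epsilon$-log canonical cone singularities in $\mathcal{F}(d,N,\epsilon)$), the order of $\pi_1^{\mathrm{alg}}$ of any closed fiber divides a number bounded by $m_\eta$, giving a uniform bound over a dense open $U\subset T$.

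Applying Noetherian induction to $T\setminus U$, we obtain in finitely many steps a stratification of $T$ into locally closed pieces over each of which $|\pi_1^{\mathrm{alg}}(\mathcal{X}_t,\sigma(t))|$ is bounded. Taking the maximum of these bounds yields a single integer $M=M(d,N,\epsilon)$ bounding the order of $\pi_1^{\mathrm{alg}}$ over the entire family, and since the set of positive integers at most $M$ is finite, the corollary follows.

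The main obstacle is the spreading-out and specialization argument for the algebraic fundamental group: one must know that in an algebraic family of klt singularities the order of $\pi_1^{\mathrm{alg}}$ of fibers is generically constant and only drops (or at worst divides) under specialization. This requires combining Xu's finiteness theorem with the theory of quasi-\'etale covers in families; the remaining ingredient, namely that fiberwise finiteness plus boundedness of the base yields a uniform bound, is then a standard Noetherian reduction.
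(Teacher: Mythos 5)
Your overall skeleton --- boundedness from Theorem~\ref{boundedness-cone-singularities}, fiberwise finiteness from Xu~\cite{Xu14}, and a Noetherian argument on the base to upgrade fiberwise finiteness to a uniform bound --- is the same as the paper's. The paper's proof is a one-liner: it combines Theorem~\ref{boundedness-cone-singularities} with the \emph{upper semi-continuity} of the order of the local algebraic fundamental group in families, quoted from \cite[Corollary 17]{BKS03}; upper semi-continuity, plus finiteness of each individual value, plus Noetherianity of $T$, immediately force the function $t\mapsto |\pi_1^{\alg}(\mathcal{X}_t,\sigma(t))|$ to be bounded.

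The genuine gap in your write-up is the specialization step, and it is not a technicality: you assert that the order of $\pi_1^{\alg}$ of a closed fiber divides (or is at most) the generic order $m_\eta$, i.e.\ that the local fundamental group can only shrink under specialization. For \emph{local} fundamental groups of singularities this is false; the correct inequality goes the other way, which is precisely what ``upper semi-continuity'' means. For instance, the family $\{xy=z^2-t\}$ marked along the origin has smooth fibers (trivial $\pi_1^{\alg}$) for $t\neq 0$ and an $A_1$-singularity with $\pi_1^{\alg}\simeq \zz/2$ at $t=0$; all fibers are $\epsilon$-log canonical cone singularities with trivial isotropies, so this jumping occurs inside the very classes under consideration. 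Correspondingly, spreading out the maximal quasi-\'etale cover of the generic fiber only shows that the nearby fibers \emph{admit} a connected quasi-\'etale cover of degree $m_\eta$, which yields the lower bound $|\pi_1^{\alg}(\mathcal{X}_t,\sigma(t))|\geq m_\eta$ rather than the upper bound you need; the restricted cover has no reason to remain maximal on special fibers. To repair the argument, replace this step by the semicontinuity (or at least constructibility) of $t\mapsto|\pi_1^{\alg}(\mathcal{X}_t,\sigma(t))|$ as in \cite[Corollary 17]{BKS03}; once that is in place, your Noetherian induction goes through verbatim. (A smaller point worth a sentence in any final version: the definition of a bounded family only guarantees that every member of the class occurs as a fiber, not that every fiber is a member, so one should restrict to, or stratify by, the locus of klt fibers before invoking Xu's theorem fiberwise.)
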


\subsection*{Acknowledgements}
The author would like to thank 
Antonio Laface, Alvaro Liendo, Christopher Hacon, Hendrik S{\"u}\ss\,, Jihao Liu, Jingjun Han, and Karl Schwede for many useful comments.
The author was partially supported by NSF research grants no: DMS-1300750, DMS-1265285 and by a
grant from the Simons Foundation; Award Number: 256202.

\section{Preliminaries}

All varieties in this paper are quasi-projective and normal over a fixed algebraically closed field $k$ of characeristic zero unless stated otherwise.
In this section we collect some definitions and preliminary results which will be used in the proof of the main theorem.

\subsection{Cone singularities}

In this subsection, we will introduce the definition of cone singularities which will be used in this paper 
and we will prove some basic properties of these singularities.

\begin{definition}\label{definition:cone-singularity}{\em 
A point $x\in X$ is said to be a {\em cone singularity} if $X$ is a normal affine variety which 
admits an effective $k^*$-action such that $x\in X$ is the unique fixed point which is 
contained in the closure of any orbit for the action.
In the above situation, we say that $k^*$ gives $X$ the structure of a {\em cone singularity}.
The point $x\in X$ which is invariant under the $k^*$-action is called the {\em vertex}.
We will often say that $x\in X$ is a cone singularity to precise that $x$ is the vertex for the $k^*$-action.
}
\end{definition}

\begin{definition}
{\em
Let $X$ be a quasi-projective variety with a $k^*$-action embedded in a projective space $\mathbb{P}^N$.
There exists an open set of the variety $X$ on which all the orbits of the $k^*$-action have the same dimension $d$ and the same degree $k$.
The {\em Chow quotient} of $X$ is the closure of the set of points which correspond to such orbits on ${\rm Chow}_{d,k}(\mathbb{P}^N)$,
the Chow variety which parametrizes cycles of dimension $d$ and degree $k$ on the projective space $\mathbb{P}^N$.
The isomorphism class of the Chow quotient is independent from the embedding of $X$ in a projective space.
The {\em normalized Chow quotient} of $X$ is the normalization of the Chow quotient of $X$.
}
\end{definition}

We start recalling a classic theorem which characterizes affine normal varieties with effective
$k^*$-actions due to Demazure (see, e.g.~\cite[3.5]{Dem88}).

\begin{theorem}\label{theorem:demazure}
Let $X$ be a normal affine variety with an effective $k^*$-action.
Then, we can write 
\[
X \simeq {\rm Spec}\left(
\bigoplus_{n \geq 0} H^0(Y, \mathcal{O}_Y(nD))
\right)
\]
where $Y$ is a quasi-projective variety
and $D$ is a $\qq$-Cartier $\qq$-divisor on $Y$.
\end{theorem}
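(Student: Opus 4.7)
The plan is to exploit the $k^*$-action to decompose the coordinate ring of $X$ into weight eigenspaces, and then identify each eigenspace with global sections of a twist on a suitable base. Writing $A = H^0(X, \mathcal{O}_X)$, the action produces a decomposition $A = \bigoplus_{n \in \zz} A_n$ by character. In the setting of this paper, where $X$ carries an attracting $k^*$-action with a unique fixed point contained in the closure of every orbit, the negative-weight pieces vanish, yielding $A = \bigoplus_{n \geq 0} A_n$. Normality of $X$ transfers directly to the graded ring structure and will be essential for identifying each piece with full global sections.

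Next I would construct the base $Y$. The natural candidate is the normalized Chow quotient of $X$ by the $k^*$-action, which is a normal quasi-projective variety whose closed points parameterize closures of general $k^*$-orbits; equivalently, $Y$ can be realized as $\operatorname{Proj}$ of the graded ring above followed by normalization. To produce $D$, I would choose a nonzero semi-invariant rational function $u$ on $X$ of minimal positive weight $e$. The $k^*$-invariant fraction field of $A$ is precisely $k(Y)$, and for $f \in A_n$ the ratio $f^e/u^n$ lies in $k(Y)$. Letting $f$ range over $A_n$ identifies $A_n$ with a finite-dimensional $k$-subspace of $k(Y)$, which one checks to be $H^0(Y, \mathcal{O}_Y(nD))$ for a $\qq$-divisor $D$ on $Y$ whose coefficients are computed from the orders of vanishing of $u$ along divisorial strata. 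The factor $1/e$ implicit in this construction is exactly what forces the coefficients of $D$ to be rational rather than integral.

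The main obstacle is showing that the identification $A_n \simeq H^0(Y, \mathcal{O}_Y(nD))$ is compatible with multiplication, so that the resulting map of graded rings is an isomorphism, and that $D$ is genuinely $\qq$-Cartier rather than merely Weil. The Cartier point is handled by choosing $m$ sufficiently divisible so that $mD$ becomes Cartier: for such $m$, the piece $A_{mn}$ corresponds to sections of a genuine line bundle, and one can then promote the identification from these pieces to the full graded ring using that $A$ is a finitely generated integral extension of its Veronese subring $\bigoplus_n A_{mn}$. Throughout, normality of $X$ is what ensures that each $A_n$ is recovered as the full space of global sections of the corresponding reflexive sheaf on $Y$, rather than as a proper subspace.
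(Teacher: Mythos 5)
The paper offers no proof of this statement --- it is quoted from Demazure \cite[3.5]{Dem88} --- so there is no internal argument to compare yours with. Your outline is the standard Demazure construction: weight decomposition of $A=H^0(X,\mathcal{O}_X)$, $Y$ as $\operatorname{Proj}$ of the graded ring (equivalently the normalized Chow quotient in this setting), trivialization of each graded piece inside $k(Y)$, recovery of the full section spaces from normality, and Cartierness of $mD$ via a Veronese subring generated in degree one. Most of these steps are the right ones, and your closing remark that normality is what upgrades the inclusion $A_n\subseteq H^0(Y,\mathcal{O}_Y(nD))$ to an equality is exactly the correct use of that hypothesis.

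There is, however, a concrete defect at the key step. The map $f\mapsto f^e/u^n$ is not $k$-linear (not even additive) once $e>1$, so it does not ``identify $A_n$ with a finite-dimensional $k$-subspace of $k(Y)$,'' and compatibility with multiplication is not a difficulty to be resolved later --- it is already lost here, since the assignment is not a homomorphism of any kind. The correct device is to use effectiveness of the action at the level of the function field: the weights of nonzero homogeneous elements of $k(X)$ form a subgroup of $\zz$ containing the weight monoid of $A$, and effectiveness forces this subgroup to equal $\zz$, so there exists a homogeneous rational function $T$ of weight exactly $1$ (a monomial in homogeneous regular functions with integer exponents of both signs). Then $f\mapsto fT^{-n}$ is a $k$-linear injection of $A_n$ into $k(Y)$, is multiplicative for free, and normality of $X$ gives $A_nT^{-n}=\{g\in k(Y)\mid \operatorname{div}_Y(g)+nD\geq 0\}$. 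With this normalization there is no global factor $1/e$ at all: the denominators of $D$ arise divisor by divisor, as the orders $q_i$ of the generic isotropy groups along the invariant prime divisors of $X$ dominating the components $D_i$ (equivalently, the multiplicities of $\pi^*D_i$), which is precisely the data recorded in Notation~\ref{notation widetilde} and Lemma~\ref{lemma:isotropy}. Finally, note that the statement as printed only makes sense for actions whose weight monoid is contained in $\nn$ (Demazure's positively graded case); your restriction to the attracting situation is the right reading, since a hyperbolic action such as the standard one on $k^*\times Z$ cannot be presented as $\operatorname{Spec}$ of a single $\nn$-graded section ring.
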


\begin{notation}\label{notation widetilde}{\em 
Given a normal affine variety $X$ with an effective $k^*$-action, 
we will denote by $Y$ a quasi-projective variety and by $D$ a $\qq$-Cartier $\qq$-divisor on $Y$
realizing the isomorphism in Theorem~\ref{theorem:demazure}.
We will write 
\[
D = \sum_{i=1}^k \frac{p_i}{q_i} D_i,
\]
where the $D_i$'s are pairwise different prime divisors on $Y$ and $p_i$ and $q_i$ are
coprime integers.
We denote by $\widetilde{X}$ the relative spectrum of the divisorial sheaf
\[
\mathcal{A}(D):=\bigoplus_{n \geq 0} \mathcal{O}_Y(nD)
\]
on $Y$. Observe that the natural inclusion of sheaves $\mathcal{O}_Y\hookrightarrow \mathcal{A}(D)$ induces
a good quotient $\pi \colon \widetilde{X}\rightarrow Y$ for the $k^*$-action on $\widetilde{X}$.
The $k^*$-action on $\widetilde{X}$ is induced by the $\nn$-grading of $\mathcal{A}(D)$.
We have a birational contraction $r\colon \widetilde{X}\rightarrow X$ which contracts a divisor $E_0$ on 
$\widetilde{X}$, moreover this divisor is fixed by the $k^*$-action and dominates $Y$.
Hence, we have an induced rational map $X \dashrightarrow Y$ which by abuse of notation
we may denote by $\pi$ as well.
}
\end{notation}

The following proposition gives a characterization of cone singularities,
it follows from~\cite[\S 4]{LS13}.

\begin{proposition}\label{proposition:cone-singularity}
A normal affine variety with an effective $k^*$-action is a cone singularity
if and only if $Y$ is projective and $D$ is a semiample and big $\qq$-Cartier $\qq$-divisor.
In particular, the birational morphism $r\colon \widetilde{X}\rightarrow X$ contracts the divisor $E_0$ to the vertex, 
and we have a good quotient $\pi \colon X-\{ x\} \rightarrow Y$.
\end{proposition}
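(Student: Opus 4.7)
The plan is to prove both implications of the biconditional using Demazure's presentation (Theorem~\ref{theorem:demazure}) together with the notation of~\ref{notation widetilde}, and then to read off the two ``in particular'' consequences (contraction of $E_0$ and the good quotient $\pi$) from the proof.

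Starting point for both directions: the ring of $k^*$-invariants on $X$ is the degree-zero piece $R_0 = H^0(Y, \mathcal{O}_Y)$, and $\Spec R_0$ is naturally identified with the scheme-theoretic fixed locus $X^{k^*}$. Moreover, the $\nn$-grading provides, for every closed point $p\in X$, a natural morphism $\mathbb{A}^1\setminus\{0\}\to X$, $t\mapsto t\cdot p$, whose extension across $0$ (when it exists) lands in $X^{k^*}$. So the cone condition translates cleanly into: (i) $R_0=k$, and (ii) every such orbit map extends over $0$, i.e., functions of positive weight vanish ``in the limit''.

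For the forward direction ($\Rightarrow$), the uniqueness of the fixed point gives $R_0 = H^0(Y, \mathcal O_Y)=k$, so $Y$ is geometrically connected. To upgrade this to $Y$ being projective, I would use the $k^*$-equivariant morphism $r\colon \widetilde{X}\to X$: the divisor $E_0$ is fixed by $k^*$ and must therefore map into $X^{k^*}=\{x\}$, so $r(E_0)=\{x\}$. Since $X$ is affine and $r$ is the affinization morphism of $\widetilde{X}$, the fiber $r^{-1}(x)$ is proper; but $E_0$ surjects onto $Y$ via $\pi$, so $Y$ is proper, and being quasi-projective it is projective. Bigness of $D$ then follows from the dimension equality $\dim X=\dim\widetilde X=\dim Y+1$, which forces $\kappa(D)=\dim Y$. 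Semiampleness of $D$ is the delicate point: one uses that $R$ is finitely generated (because $X$ is an affine variety) together with the cone structure (attractive $k^*$-action with vertex in every orbit closure) to see, following~\cite[\S 4]{LS13}, that the rational map $Y\dashrightarrow \Proj R$ is actually a morphism, equivalently that some $mD$ is basepoint-free. Finally, the good quotient $\pi\colon X\setminus\{x\}\to Y$ drops out of the good quotient $\widetilde{X}\to Y$ because $r$ is an isomorphism outside $E_0$ and $r(E_0)=\{x\}$.

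For the backward direction ($\Leftarrow$), assume $Y$ is projective and $D$ is semiample and big. Semiampleness of $D$ gives finite generation of $R=\bigoplus_{n\ge 0}H^0(Y,\mathcal O_Y(nD))$ (pass to a Cartier multiple, use that $mD$ is basepoint free, and apply the standard finite-generation statement for section rings of semiample divisors), so $X:=\Spec R$ is an affine variety with an $\nn$-grading and hence a $k^*$-action; normality is inherited from normality of $\widetilde X=\Spec_Y \mathcal A(D)$ since $R=H^0(\widetilde X,\mathcal O_{\widetilde X})$. Projectivity and integrality of $Y$ give $R_0=k$, so $R$ has a unique homogeneous maximal ideal $R_+$, yielding a unique $k^*$-fixed closed point $x\in X$; the attraction property is the $\nn$-grading statement above. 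Bigness of $D$ gives $\dim X=\dim Y+1$, matching the expected dimension and ruling out degenerate behavior.

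The main obstacle is the semiampleness step in the forward direction: finite generation of $R$ alone is not enough to force semiampleness of $D$, so one must genuinely use the geometry of the attractive $k^*$-action—equivalently, that $r\colon\widetilde X\to X$ contracts only $E_0$—to rule out the presence of ``non-semiample'' behavior in any birational modification of $Y$, as is done in~\cite[\S 4]{LS13}.
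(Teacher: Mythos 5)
The paper does not actually prove this proposition: it is stated as following from \cite[\S 4]{LS13}, so there is no in-paper argument to measure your sketch against, and I can only assess it on its own terms. The dictionary you set up is the right one (the fixed locus is $\Spec R_0$; the cone condition is equivalent to the grading being an $\nn$-grading with $R_0=k$), and the backward direction together with the two ``in particular'' claims is essentially correct. The forward direction, however, has a genuine gap. The step ``since $X$ is affine and $r$ is the affinization morphism of $\widetilde X$, the fiber $r^{-1}(x)$ is proper'' is not a valid principle --- affinization morphisms need not have proper fibers --- and in fact the conclusion you are trying to reach is false for an arbitrary Demazure presentation. Take $Y=\pp^2\setminus\{p\}$ and $D$ the restriction of a hyperplane class: since $Y$ is normal and the removed locus has codimension two, $\bigoplus_{n\geq 0}H^0(Y,\mathcal{O}_Y(nD))=\bigoplus_{n\geq 0}H^0(\pp^2,\mathcal{O}(n))$, so this couple presents the cone singularity $0\in\mathbb{A}^3$ even though $Y$ is not projective. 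So ``$Y$ is projective and $D$ is semiample and big'' can only be asserted for a suitably chosen presentation (the one implicit in Remark~\ref{minimal}, i.e.\ a model dominating $\operatorname{Proj}R$ finitely), and the argument has to be routed through the canonical model $\operatorname{Proj}R$ --- which is projective over $\Spec R_0=\Spec k$ once you know $R_0=k$ and $R$ is finitely generated --- rather than through properness of $r^{-1}(x)$.

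The second issue is that the semiampleness of $D$, which you correctly single out as the delicate point, is not proved but deferred to \cite[\S 4]{LS13}; that is exactly the citation the paper itself relies on for the whole proposition, so the hardest implication remains outsourced. Relatedly, the ``in particular'' clause about the good quotient $\pi\colon X\setminus\{x\}\rightarrow Y$ is deduced in your sketch from ``$r$ is an isomorphism outside $E_0$,'' but when $D$ is semiample and big without being ample the morphism $r$ also contracts the $\pi$-preimages of the exceptional locus of $Y\rightarrow\operatorname{Proj}R$, so that claim again only holds for the minimal model. In short: the skeleton is right, but the two load-bearing steps of the forward direction (projectivity of $Y$ and semiampleness of $D$) are either unjustified or deferred, and both are repaired by working with the canonical quotient $\operatorname{Proj}R$ from the start.
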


\begin{remark}\label{minimal}{\em
We know that every cone singularity corresponds to a semiample and big $\qq$-Cartier $\qq$-divisor
on a projective variety $Y$. Furthermore, we may replace the variety $Y$ with the image of the morphism induced by a sufficiently large
and divisible multiple of $D$ to assume that $Y$ is projective and $D$ is an ample $\qq$-Cartier $\qq$-divisor.
This latter model is said to be minimal in the sense of~\cite[Definition 8.7]{AH06}.
Indeed, this variety is the normalized Chow quotient of $Y$, which in this case coincides with the GIT quotient 
since the GIT decomposition has a unique maximal dimensional chamber (see, e.g.~\cite{HK00}).
Observe that pulling-back $D$ to a higher model of $Y$ does not change the cone singularity $x\in X$; 
however, it changes the model $\widetilde{X}$ introduced in~\ref{notation widetilde}.

In what follows, we may say that the cone singularity $x\in X$ corresponds to the 
ample $\qq$-Cartier $\qq$-divisor $D$ on the normalized Chow quotient $Y$, or simply,
corresponds to the couple $(Y,D)$. Observe that our definition of couples differs
from the classic one in which $D$ is assumed to be reduced (see Definition~\ref{pairs}).}
\end{remark}

The following lemma gives a description of the canonical divisor of 
$X$ in terms of the couple $(Y,D)$ (see, e.g.~\cite[Theorem 3.21]{PS11} or~\cite[Theorem 2.8]{Wat81}).

\begin{lemma}\label{lemma:canonical}
Let $x\in X$ be a cone singularity corresponding to the couple $(Y,D)$.
Then the canonical divisor of $X$ is given by
\[
K_X = \pi^*(K_Y)+ \sum_{i=1}^k (q_i-1) \pi_{*}^{-1}D_i,
\]
and the canonical divisor of $\widetilde{X}$ is given by
\[
K_{\widetilde{X}} = \pi^*(K_Y) + \sum_{i=1}^k (q_i-1)\pi_{*}^{-1}D_i - E_0.
\]
\end{lemma}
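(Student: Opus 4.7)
The plan is to prove the formula for $K_{\widetilde{X}}$ directly by computing the divisor of a distinguished rational top form on $\widetilde{X}$, and then to deduce the formula for $K_X$ by pushing forward along the birational contraction $r\colon \widetilde{X}\to X$. Let $\omega_Y$ be a rational $(\dim Y)$-form on $Y$ with $\divv(\omega_Y) = K_Y$, and let $T$ denote the degree-one tautological rational function on $\widetilde{X}$ arising from the $\nn$-grading on $\mathcal{A}(D)$. I would consider the rational top form
$$
\omega := \pi^*\omega_Y \wedge \tfrac{dT}{T}
$$
on $\widetilde{X}$, which plays the role of a $k^*$-equivariant logarithmic volume form, and show that its divisor coincides with the right-hand side of the stated formula.

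The computation is local on $Y$, and amounts to evaluating $\ord_F(\omega)$ for each prime divisor $F$ on $\widetilde{X}$. On the open locus where $D$ is Cartier, $\pi$ is a line bundle with $T$ a fiber coordinate, and the standard computation yields $\divv(\omega) = \pi^*K_Y - E_0$ there, with the $-E_0$ arising from the simple pole of $dT/T$ along the zero section. The only further contribution to $\divv(\omega)$ comes from the generic points of the $\widetilde{D}_i := \pi_*^{-1}D_i$. Using the description of $\widetilde{X}$ over the DVR $\mathcal{O}_{Y, D_i}$ with uniformizer $u$, together with the coprimality $\gcd(p_i, q_i) = 1$, one checks that $\mathcal{O}_{\widetilde{X}, \widetilde{D}_i}$ is a DVR whose uniformizer $\tau$ satisfies $u = \tau^{q_i}\cdot\text{unit}$ and $T = \tau^{p_i}\cdot\text{unit}$. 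A direct differential calculation in the coordinate $\tau$ then gives $\ord_{\widetilde{D}_i}(du\wedge dT) = q_i + p_i - 1$, and hence $\ord_{\widetilde{D}_i}(\omega) = q_i - 1$. Combining these local contributions yields the formula for $K_{\widetilde{X}}$. The formula on $X$ then follows by pushing forward via $r$: because $r$ contracts $E_0$ to the vertex and is an isomorphism on the complement, we have $r_*K_{\widetilde{X}} = K_X$, $r_*\widetilde{D}_i = \pi_*^{-1}D_i$, and $r_*E_0 = 0$.

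The main technical obstacle is establishing the local valuations $v(u) = q_i$ and $v(T) = p_i$ at $\widetilde{D}_i$. This rests on analyzing the normal two-dimensional affine model of $\widetilde{X}$ above $\Spec\,\mathcal{O}_{Y, D_i}$: the graded algebra generators satisfy a relation of the form $T^{q_i} = u^{p_i}\cdot s$ with $s$ invertible at $\widetilde{D}_i$, and the hypothesis $\gcd(p_i, q_i) = 1$ forces the minimal positive integer solution to the resulting valuation equation $q_i\cdot v(T) = p_i\cdot v(u)$ to be $(v(u), v(T)) = (q_i, p_i)$, consistent with $\tau$ generating the maximal ideal of the DVR.
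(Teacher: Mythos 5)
The paper itself gives no proof of this lemma: it is quoted from \cite[Theorem 3.21]{PS11} and \cite[Theorem 2.8]{Wat81}. Your argument is, in substance, the proof in \cite{PS11} specialized to rank-one torus actions: compute the divisor of the invariant logarithmic volume form $\pi^*\omega_Y\wedge dT/T$ on $\widetilde{X}$, with the simple pole along the zero section giving $-E_0$ and the ramification of order $q_i$ at the generic point of $D_i$ giving the coefficient $q_i-1$ along $\pi_*^{-1}D_i$, then push forward by $r$ (an isomorphism off the vertex) to get the formula on $X$. The computation is correct --- I checked it reproduces $\divv(\omega)=\{T=0\}-2\{S=0\}$ for $Y=\pp^1$, $D=\tfrac12[0]$, $X=\mathbb{A}^2$ --- but two points deserve more care than your sketch gives them. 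First, the valuative input $v(u)=q_i$, $v(T)=p_i$ at the generic point of $\pi_*^{-1}D_i$, which you flag as the main obstacle, is exactly Proposition~\ref{prop:div(fchiu)} applied to $f=u$ and to $\chi^1$, so it can simply be quoted rather than re-derived from the relation $T^{q_i}=u^{p_i}s$. Second, the step $\ord_{\pi_*^{-1}D_i}(du\wedge dT)=q_i+p_i-1$ is not a purely one-variable calculation: the extension $\mathcal{O}_{Y,D_i}\subset\mathcal{O}_{\widetilde{X},\pi_*^{-1}D_i}$ is not finite, so the differentials of the units in your expressions $u=\tau^{q_i}\cdot\mathrm{unit}$ and $T=\tau^{p_i}\cdot\mathrm{unit}$ have components transverse to $d\tau$, and one must check that the coefficient of $\tau^{q_i+p_i-1}$ survives after wedging with a volume form pulled back from $D_i$. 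This does hold --- the unit $s=u^{p_i}T^{-q_i}$ has nonzero $k^*$-weight and is not identically zero on $\pi_*^{-1}D_i$, hence is nonconstant along the one-dimensional orbits fibering $\pi_*^{-1}D_i$ over $D_i$, which is exactly the nondegeneracy needed --- but it should be said. Finally, note that your computation establishes the identity for the compatible representatives $K_Y=\divv(\omega_Y)$, $K_{\widetilde{X}}=\divv(\omega)$, which is the correct reading of the statement.
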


The following lemma is proved in~\cite[Proposition 1.3.5.7]{ADHL15} 
in the context of Cox rings.

\begin{lemma}\label{lemma:isotropy}
Let $x\in X$ be a cone singularity corresponding to the couple $(Y,D)$. Let $x_0\in X$ any point which is not the vertex.
The order of the isotropy group of $k^*$ at $x_0$ equals the
Cartier index of $D$ at $\pi(x_0)$.
\end{lemma}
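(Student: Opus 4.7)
The statement is local on $Y$ and $k^*$-equivariant; since the birational morphism $r\colon \widetilde{X}\to X$ of Notation~\ref{notation widetilde} contracts only $E_0$ to the vertex $x\ne x_0$, the point $x_0$ admits a unique lift $\widetilde{x}_0\in \widetilde{X}\setminus E_0$ with the same $k^*$-isotropy. So it suffices to compute the $k^*$-stabilizer of $\widetilde{x}_0$ lying over $y := \pi(\widetilde{x}_0)$. Writing $m$ for the Cartier index of $D$ at $y$, the goal becomes $\mathrm{Stab}(\widetilde{x}_0) = \mu_m$.

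Shrink $Y$ to an affine open neighborhood $U = \Spec(R)$ of $y$ on which $\mathcal{O}_Y(mD)$ is free, trivialized by a section $\sigma$. The $m$-th Veronese subalgebra of $\mathcal{A}(D)|_U$ is then the polynomial ring
\[
\mathcal{A}(D)^{(m)}\big|_U = \bigoplus_{k\ge 0} \Gamma\bigl(U, \mathcal{O}_Y(mkD)\bigr)\cdot t^{mk} = R[\sigma\, t^m],
\]
so the inclusion $\mathcal{A}(D)^{(m)}|_U \hookrightarrow \mathcal{A}(D)|_U$ geometrically exhibits $U\times\mathbb{A}^1$ as the categorical quotient of $\widetilde{X}|_U$ under the subgroup $\mu_m\subset k^*$ that acts through the grading (its invariants are precisely the pieces in degree divisible by $m$). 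Denote this quotient by $q\colon \widetilde{X}|_U \to U\times\mathbb{A}^1$.

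To conclude, I would show that $q$ is totally ramified over the fiber above $y$: the minimality of $m$ as the Cartier index forces the scheme-theoretic fiber of $\widetilde{X}|_U\to U$ over $y$ to have the form $k(y)[z,t]/(t^m)$, so closed points of this fiber are in bijection with those of the fiber of $U\times\mathbb{A}^1\to U$ over $y$. This bijection forces $\mu_m \subseteq \mathrm{Stab}(\widetilde{x}_0)$. Conversely, since $\widetilde{x}_0\notin E_0$ and the $k^*$-fixed locus on $\widetilde{X}$ is contained in $E_0$, the stabilizer is a proper, hence finite cyclic, subgroup of $k^*$, and the generic degree $m$ of $q$ caps its order at $m$. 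The main obstacle is the local verification that the fiber over $y$ is scheme-theoretically generated in degree $m$ with nilpotent lower-degree contributions; this is the commutative-algebra content of the Cartier-index hypothesis, aligning with the Cox-ring computation of~\cite[Proposition~1.3.5.7]{ADHL15}.
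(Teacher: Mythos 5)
The paper offers no proof of this lemma at all --- it simply cites \cite[Proposition 1.3.5.7]{ADHL15} --- so your proposal is being judged on its own merits. Your overall strategy is the right one and matches the Cox-ring argument in spirit: localize over an affine $U\ni y:=\pi(x_0)$ on which $mD$ is principal, lift $x_0$ to $\widetilde{x}_0\in\widetilde{X}\setminus E_0$, and compare $\widetilde{X}|_U$ with its $\mu_m$-quotient $\Spec R[\sigma t^m]=U\times\mathbb{A}^1$. But the two steps you rely on to extract the stabilizer are both incorrect as stated. First, the scheme-theoretic fiber of $\widetilde{X}|_U\to U$ over $y$ is in general \emph{not} $k(y)[z,t]/(t^m)$: its degree-$n$ graded piece is $\mathcal{O}_Y(nD)\otimes k(y)$, whose dimension is the minimal number of generators of the reflexive module $\mathcal{O}_Y(nD)$ at $y$, and this exceeds $1$ whenever some intermediate $\mathcal{O}_Y(nD)$, $0<n<m$, fails to be locally free at $y$. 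Concretely, take $y$ the vertex of $Y=\Spec k[a,b,c]/(ab-c^2)$ and $D=\tfrac12 L$ with $L=V(a,c)$ a ruling: the Cartier index at $y$ is $4$, but the fiber ring has graded dimensions $1,1,2,2,1,1,2,2,\dots$. What is true, and all you need, is that the \emph{reduced} fiber minus $E_0$ is a single orbit whose coordinate ring is $k(y)[w]$ with $w$ of weight exactly $m$ --- but that is essentially the statement being proved, so deferring it as ``the main obstacle'' leaves the proof empty.

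Second, the upper bound is a non sequitur: the generic degree of the $\mu_m$-quotient $q$ does not cap the order of a $k^*$-stabilizer at a special point (a stabilizer $\mu_e$ with $e\nmid m$ is not ruled out by $q$ having degree $m$). The argument that actually works, in both directions, is the divisor formula of Proposition~\ref{prop:div(fchiu)}. For the upper bound: choosing $\sigma$ with $\divv_Y(\sigma)=-mD$ near $y$, one gets $\divv_{\widetilde{X}}(\sigma t^m)=mE_0$, so $\sigma t^m$ does not vanish at $\widetilde{x}_0\notin E_0$; a nonvanishing weight-$m$ semi-invariant on the orbit forces the stabilizer order $e$ to divide $m$. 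For the lower bound: if some $ft^e$ with $e<m$ were nonzero at $\widetilde{x}_0$, then, since $\pi^{-1}(y)\setminus E_0$ is a single orbit and hence $\widetilde{x}_0$ lies on $\pi_*^{-1}P$ for every prime divisor $P$ of $Y$ through $y$, the same formula forces $\divv_Y(f)+eD=0$ near $y$, contradicting the minimality of the Cartier index. You should rebuild the proof around Proposition~\ref{prop:div(fchiu)} (or carry out the corresponding computation in $\mathcal{A}(D)$ directly); as written, both the claimed fiber structure and the degree bound would fail.
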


\subsection{Log discrepancies}

In this subsection, we will introduce the definition of log discrepancies.
We will prove a formula relating the log discrepancies of a cone singularity 
with the log discrepancies with respect to certain pair structure on the normalized Chow quotient.
This formula is implicit in the proof of~\cite[Theorem 4.7]{LS13}.

\begin{definition}\label{pairs}
{\em 
A {\em couple} $(Y,B)$ consists of a normal quasi-projective algebraic variety $Y$
and a $\qq$-divisor $B$ on $Y$.
A couple $(Y,B)$ is said to be a {\em sub-pair} if the $\qq$-divisor $K_Y+D$ is $\qq$-Cartier.
A sub-pair $(Y,B)$ is said to be a {\em pair} if $D$ is an effective $\qq$-divisor. 
}
\end{definition}

\begin{definition}\label{log discrepancies}
{\em 
Consider a pair $(Y,B)$, a projective birational morphism $f\colon Y'\rightarrow Y$
from a quasi-projective normal variety $Y'$ and a prime divisor $E$ on $Y'$.
We define the {\em log discrepancy} of $(Y,B)$ with respect to $E$ to be 
\[
a_E(K_Y+B) = 1 + {\rm coeff}_E( K_{Y'} - f^*(K_Y+B)).
\]
We say that a pair $(Y,B)$ is {\em $\epsilon$-log canonical} if the log discrepancies
with respect to any prime divisor over $E$ are greater or equal to $\epsilon$.
We say that a pair $(Y,B)$ is {\em Kawamata log terminal}, or simply {\em klt}, 
if the log discrepancies with respect to any prime divisor over $Y$ are greater than zero.
}
\end{definition}

\begin{remark}{\em
A pair $(Y,B)$ is $\epsilon$-log canonical if and only if there exists a projective birational morphism
$f\colon Y'\rightarrow Y$ from a smooth quasi-projective variety $Y'$
such that the exceptional locus $E$ of $f$ is divisorial, the divisor $E\cup f_*^{-1}(B)$
has simple normal crossing support on $Y'$, and the $\qq$-divisor
\[
K_{Y'} - f^*(K_Y+B),
\]
has coefficients greater or equal than $\epsilon-1$.
}
\end{remark}

\begin{proposition}
Let $x\in X$ be a cone singularity corresponding to the couple $(Y,D)$. 
The fields of fractions of $X$ is isomorphic to $k(Y)[M]$ where $M$ is the lattice of characters of the torus.
Hence, every divisorial valuation on $Y$ induces a divisorial valuation on $X$.
\end{proposition}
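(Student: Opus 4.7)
The plan is to use Demazure's presentation to identify $k(X)$ with $k(Y)(t)$, and then transfer divisorial valuations through this identification.

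First, after fixing a nonzero rational section of $\mathcal{O}_Y(D)$, the section ring
\[
R := \bigoplus_{n\ge 0} H^0(Y,\mathcal{O}_Y(nD))
\]
embeds inside the Laurent polynomial ring $k(Y)[t,t^{-1}]$ as a graded subring, where $t$ is a coordinate on the torus $k^*=\Spec k[M]$ and each summand $H^0(Y,\mathcal{O}_Y(nD))$ is realized as $\{f\in k(Y):(f)+nD\ge 0\}\cdot t^n$. Since $M=\zz$ we have $k(Y)[M]=k(Y)[t,t^{-1}]$, whose field of fractions is $k(Y)(t)$, so the first assertion reduces to showing $\operatorname{Frac}(R)=k(Y)(t)$.

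The inclusion $\operatorname{Frac}(R)\subseteq k(Y)(t)$ is immediate from $R\subseteq k(Y)[t]$. For the reverse inclusion I would proceed in two steps. By Remark~\ref{minimal} we may assume $D$ is ample, so for $n$ sufficiently large the rational map defined by $|nD|$ is birational onto its image; hence ratios $s_1/s_2$ of nonzero sections in $H^0(Y,\mathcal{O}_Y(nD))$ generate $k(Y)$ as a field, and each such ratio lies in $\operatorname{Frac}(R)$ since $s_1t^n,s_2t^n\in R$. This yields $k(Y)\subseteq\operatorname{Frac}(R)$. Effectiveness of the $k^*$-action then implies that the semigroup $S:=\{n\ge 0:H^0(Y,\mathcal{O}_Y(nD))\ne 0\}$ satisfies $\gcd(S)=1$ (otherwise a nontrivial finite subgroup of $k^*$ would act trivially on $R$), so $S$ contains two coprime elements $n,m$. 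Picking nonzero $s\in H^0(Y,\mathcal{O}_Y(nD))$ and $s'\in H^0(Y,\mathcal{O}_Y(mD))$, we have $t^n=(st^n)/s\in\operatorname{Frac}(R)$ and similarly $t^m\in\operatorname{Frac}(R)$, and Bezout gives $t\in\operatorname{Frac}(R)$. This completes the identification $k(X)=k(Y)(t)$.

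For the second assertion, given a divisorial valuation $v$ on $k(Y)$ realized by a prime divisor $E'$ on a projective birational model $g\colon Y'\to Y$, I would consider the Gauss extension
\[
\tilde v\Bigl(\sum_i f_i t^i\Bigr):=\min_i v(f_i)
\]
on $k(Y)[t,t^{-1}]$ and extend it to $k(Y)(t)=k(X)$ in the usual way. To recognize $\tilde v$ as divisorial on $X$, I would form $\widetilde{X}':=\Spec_{Y'}\bigoplus_{n\ge 0}\mathcal{O}_{Y'}(ng^*D)$ with good quotient $\pi'\colon\widetilde{X}'\to Y'$, note that $\widetilde{X}'$ is birational to $X$, and check that $\pi'^{-1}(E')$ is a prime divisor whose associated valuation on $k(\widetilde{X}')=k(X)$ coincides with $\tilde v$. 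The main subtlety lies precisely in this last identification; however, the sheaf $\mathcal{A}(g^*D)$ is locally free in each degree over the generic point of $E'$, so $\widetilde{X}'$ is Zariski-locally isomorphic to $U'\times\mathbb{A}^1$ there, which makes the comparison of valuations a transparent local calculation.
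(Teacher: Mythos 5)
The paper states this proposition without proof (it is treated as standard from the theory of $\mathbb{T}$-varieties), so there is no in-paper argument to compare yours against line by line; the right test is whether your construction is consistent with how the induced valuation $E_X$ is used afterwards, in Proposition~\ref{prop:div(fchiu)} and Proposition~\ref{prop:comparison of ld}.

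Your proof of the first assertion is correct: the embedding of the section ring into $k(Y)[t,t^{-1}]$, the recovery of $k(Y)$ from ratios of sections of a very ample multiple of $D$ (using Remark~\ref{minimal}), and the Bezout argument producing $t$ itself all go through.

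The second assertion is where there is a genuine error. The Gauss extension $\tilde v\bigl(\sum_i f_it^i\bigr)=\min_i v(f_i)$ is indeed a divisorial valuation on $k(X)$ --- it is $\operatorname{ord}_{E'\times\mathbb{A}^1}$ on $Y'\times\mathbb{A}^1$, which is birational to $X$ --- so the literal statement survives. But it is \emph{not} the valuation of $\pi'^{-1}(E')$ on $\widetilde{X}'$, and the ``transparent local calculation'' would expose this. Write $c=p/q$ in lowest terms for the coefficient of $g^*D$ along $E'$ and let $s$ be a uniformizer of the discrete valuation ring $\mathcal{O}_{Y',E'}$. The algebra defining $\widetilde{X}'$ over this local ring is $\bigoplus_{n\ge 0}s^{-\lfloor nc\rfloor}\mathcal{O}_{Y',E'}\,t^n$: each graded piece is free of rank one, but the graded algebra is a polynomial algebra (equivalently, $\widetilde{X}'$ is locally a product $U'\times\mathbb{A}^1$) only when $c\in\zz$, so your local triviality claim fails whenever $q>1$. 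More decisively, for $c>0$ and $n\gg 0$ the element $s^{-\lfloor nc\rfloor}t^n$ is a regular function on $\widetilde{X}'$ near the generic point of $\pi'^{-1}(E')$ yet has Gauss value $-\lfloor nc\rfloor<0$, so $\tilde v$ cannot be the order of vanishing along any divisor of $\widetilde{X}'$. The valuation the paper actually attaches to $E'$ is $\operatorname{ord}$ along the strict transform of $E'$ in $\widetilde{X}'$, whose value on a monomial $f\chi^u$ is $q(uc+v(f))=up+qv(f)$ rather than $v(f)$; this is exactly what Proposition~\ref{prop:div(fchiu)} records and what produces the factor $W_E(D)$ in Proposition~\ref{prop:comparison of ld}. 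So either drop the identification and realize $\tilde v$ on $Y'\times\mathbb{A}^1$ (which proves the statement as written but yields a valuation the rest of the paper does not use), or replace the Gauss extension by this weighted monomial valuation to obtain the $E_X$ that the subsequent results require.
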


\begin{notation}
{\em
Given a divisorial valuation $E$ on $Y$ we will denote by $E_X$ the corresponding divisorial valuation on $X$.
Moreover, we will denote by $\chi^u$ the {\em character} of the torus corresponding to $u\in M$.
Hence, every rational function on $X$ has the form $f\chi^u$ where $f$ is a rational function on $Y$ and $u\in M$.
}
\end{notation}

\begin{definition}{\em
Given a projective variety $Y$, a $\qq$-Cartier $\qq$-divisor $D$ on $Y$,
a projective birational morphism $f\colon Y'\rightarrow Y$, 
and a prime divisor $E$ on $Y$, 
we define the {\em Weil index} of $D$ at $E$ to be the smallest positive
integer $\mu$ such that $\mu f^*(D)$ is a Weil divisor at the generic point of $E$,
i.e. the coefficient of $\mu f^*(D)$ at $E$ is an integer.
If $E$ is non-exceptional over $Y$ then the Weil index of $D$ with respect to $E$
is just $q_E$ where $\frac{p_E}{q_E}$ is the coefficient of $D$ at $E$ with $p_E$ and $q_E$ coprime integers.
Observe that the Weil index of $D$ at $E$ does not depend on $f\colon Y'\rightarrow Y$,
it only depends on the divisorial valuation corresponding to $E$.
We will denote the Weil index of $D$ at $E$ by $W_{E}(D)$.
}
\end{definition}

The following proposition is straightforward from the definition of Weil index.

\begin{proposition}\label{prop:weil index vs cartier index}
Let $D$ be a $\qq$-Cartier $\qq$-divisor on a projective variety $Y$.
The Weil index at any exceptional divisor over $Y$ is less than or equal to the Cartier index of $D$.
\end{proposition}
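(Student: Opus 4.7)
My plan is to unwind the two definitions and note that the Cartier index automatically gives a uniform integer denominator on any birational model.

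Let $N$ denote the Cartier index of $D$, so that $ND$ is an integral Cartier divisor on $Y$. Given any projective birational morphism $f\colon Y'\to Y$ from a normal variety and any prime exceptional divisor $E$ on $Y'$, the pullback $f^*(ND)$ is a Cartier divisor on $Y'$, since pullbacks of Cartier divisors are Cartier. In particular, $f^*(ND)=Nf^*(D)$ has integral coefficients at every prime divisor of $Y'$, and so
\[
N\cdot\mathrm{coeff}_E\bigl(f^*(D)\bigr)\in\zz.
\]

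By definition, the Weil index $\mu:=W_E(D)$ is the smallest positive integer $n$ for which $n\cdot\mathrm{coeff}_E(f^*(D))\in\zz$. The set of such positive integers is precisely $\mu\zz_{>0}$, so the displayed integrality forces $\mu\mid N$, and in particular $\mu\le N$.

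There is no real obstacle here: the statement is essentially a tautology once one recognizes that \emph{being Cartier on $Y$ implies integrality of the pullback on every birational model}. The only subtle point is to verify that the Weil index $W_E(D)$ is well defined independently of the model $f\colon Y'\to Y$ containing $E$ as a divisor, which is already noted in the definition; one sees this by taking a common resolution dominating two such models and observing that the coefficient at $E$ only depends on the discrete valuation on the function field of $Y$ determined by $E$.
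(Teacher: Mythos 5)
Your argument is correct and is exactly the ``straightforward from the definition'' reasoning the paper leaves implicit (the paper states the proposition without writing out a proof): $N f^*(D)=f^*(ND)$ is Cartier, hence integral at $E$, so the minimal integer clearing the denominator of $\mathrm{coeff}_E(f^*(D))$ divides $N$. Nothing is missing.
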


The following proposition is well-known (see e.g.~\cite[Proposition 3.11]{PS11}).

\begin{proposition}\label{prop:cartier is principal}
 Any $k^*$-invariant Cartier divisor is principal on a cone singularity $x\in X$, i.e.
for any $k^*$-invariant Cartier divisor $D$ on $X$ we may find a rational function $f\in k(Y)$ and $u\in M$
such that $D={\rm div}_X(f\chi^u)$.
\end{proposition}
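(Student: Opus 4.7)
The plan is to exploit the positive graded structure on $A := \Gamma(X, \mathcal{O}_X)$ coming from the $k^*$-action and to apply graded Nakayama in order to trivialize the equivariant line bundle $\mathcal{O}_X(D)$. By Theorem~\ref{theorem:demazure} we may write $A = \bigoplus_{n \geq 0} A_n$ with $A_0 = H^0(Y, \mathcal{O}_Y) = k$, so that $A_+ := \bigoplus_{n > 0} A_n$ is the unique maximal homogeneous ideal of $A$, and it coincides with the maximal ideal $\mathfrak{m}_x$ of the vertex. In particular, $A$ is a graded-local $k$-algebra with residue field $k$.

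Let $D$ be a $k^*$-invariant Cartier divisor on $X$. Then the invertible sheaf $\mathcal{O}_X(D)$ inherits a canonical $k^*$-equivariant structure, so that $N := \Gamma(X, \mathcal{O}_X(D))$ is a finitely generated graded $A$-module which is locally free of rank one. The fiber $N / A_+ N = N \otimes_A A/\mathfrak{m}_x$ of $\mathcal{O}_X(D)$ at the vertex is one-dimensional over $k$ and, being a graded $k$-vector space, is concentrated in a single character $u \in M$. Lift a nonzero element to a homogeneous element $e \in N$ of weight $u$. Since $N$ is finitely generated and bounded below in degrees (as $A$ is positively graded), the graded Nakayama lemma gives $N = A \cdot e$. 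Torsion-freeness of $N$ over the integral domain $A$ then upgrades the surjection $A \twoheadrightarrow N$, $a \mapsto a e$, to an isomorphism of graded $A$-modules, exhibiting $e$ as a nowhere-vanishing section of $\mathcal{O}_X(D)$.

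Interpreting $e$ as a rational function $g \in k(X)$ thus yields $D = \divv_X(g)$, so $D$ is principal. The fact that $e$ is homogeneous of weight $u$ is exactly the statement that $g$ is a $k^*$-semi-invariant of weight $u$; writing $g = (g\chi^{-u})\chi^u$, the factor $g\chi^{-u}$ is $k^*$-invariant and hence a rational function $f \in k(Y)$ on the Chow quotient, so that $D = \divv_X(f \chi^u)$ as desired. The technical heart of the argument is the graded Nakayama step, which requires verifying that $N$ is bounded below in degrees and that its fiber at the vertex is a one-dimensional \emph{graded} $k$-vector space; the remainder is a routine unwinding of the equivariant line-bundle/Cartier-divisor dictionary together with the identification of $k^*$-invariant rational functions on $X$ with $k(Y)$.
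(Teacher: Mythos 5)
Your argument is correct, but note that the paper does not actually prove this proposition: it is stated as ``well-known'' with a pointer to \cite{PS11}*{Proposition 3.11}, where it is deduced from the description of invariant Cartier divisors on $T$-varieties via divisorial support functions (an invariant Cartier divisor is given by piecewise-linear data, and the presence of the full-dimensional ``vertex'' forces this data to be globally linear, i.e.\ principal). Your route is more elementary and self-contained: it is the direct generalization of the classical toric fact that on $U_\sigma$ with $\sigma$ full-dimensional every invariant Cartier divisor is principal, proved by trivializing the linearized line bundle at the unique fixed point via graded Nakayama. The key points are all in place and all genuinely use the hypotheses: $A_0=H^0(Y,\mathcal{O}_Y)=k$ uses projectivity and connectedness of $Y$ (i.e.\ the cone condition), so $A_+=\mathfrak{m}_x$ is the unique graded maximal ideal; the one-dimensionality of the graded fiber $N/A_+N$ uses that $D$ is Cartier \emph{at the vertex}; and the final step uses $k(X)^{k^*}=k(Y)$, which the paper records just before this proposition. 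Two small points worth making explicit if you write this up: the weight decomposition $N=\bigoplus_u N_u$ should be justified (it holds because $N$ is a finitely generated $A$-module with a rational $k^*$-action), and with the usual convention $\mathcal{O}_X(D)=\{g : \divv(g)+D\geq 0\}$ a generating section $e$ gives $D=-\divv_X(e)=\divv_X(e^{-1})$, so the semi-invariant function you want is $e^{-1}$, of weight $-u$; this sign does not affect the conclusion.
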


\begin{remark}{\em 
The ring associated to the cone singularity $x\in X$ has a natural $M$-grading (see, e.g.~\cite{AH06}).
However, the weighted monoid of such grading is isomorphic to $\nn$.
We say that an element $u\in M$ is {\em positive} if it lies in the weighted monoid,
and we say it is {\em negative} if its additive inverse is positive.
}
\end{remark}

Since any $k^*$-invariant Cartier divisor $D$ on a cone singularity is principal, we are interested on 
the principal divisor corresponding to the rational function $f\chi^u$ on $x\in X$.
The following proposition gives us the corresponding principal divisor on $X$ (see, e.g.~\cite[Proposition 3.14]{PS11}).

\begin{proposition}\label{prop:div(fchiu)}
Let $x\in X$ be a cone singularity corresponding to the couple $(Y,D)$ and $f\chi^u$ a rational function on $X$.
Then we can write
\[
{\rm div}_{X}(f\chi^u)=  \sum_{i=1}^k q_i\left( u\frac{p_i}{q_i} + {\rm ord}_{D_i}(f) \right) \pi^{-1}_*D_i,
\]
and
\[
{\rm div}_{\widetilde{X}}(f\chi^u) = uE_0+ \sum_{i=1}^k q_i\left( u\frac{p_i}{q_i} + {\rm ord}_{D_i}(f) \right) \pi^{-1}_*D_i.
\]
Here ${\rm div}_X(f)$ denotes the principal divisor on $X$ associated to the rational function $f\in k(X)$.
\end{proposition}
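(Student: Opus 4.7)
The plan is to split $\divv(f\chi^u)=\divv(f)+u\,\divv(\chi)$ by linearity and compute each term locally at the relevant prime divisors. Since $f\chi^u$ is $k^*$-semi-invariant of weight $u$, its divisor on $\widetilde{X}$ is $k^*$-invariant, hence supported on the $k^*$-invariant prime divisors of $\widetilde{X}$; by construction these are $E_0$ together with the strict transforms $\pi_*^{-1}(D)$ of prime divisors $D\subset Y$. It is enough to establish the formula on $\widetilde{X}$: the corresponding formula on $X$ is then immediate, since $r\colon \widetilde{X}\to X$ is an isomorphism outside $E_0$ and contracts $E_0$ to the vertex, so the $uE_0$ term drops out of $\divv_X$.

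For the coefficient along $E_0$, I would restrict to an affine open $U\subset Y$ disjoint from $\supp(D)$ and from the zeros and poles of $f$. Over such a $U$ the divisorial sheaf $\mathcal{A}(D)|_U\cong \mathcal{O}_U[\chi]$ is a genuine polynomial algebra of weight one, so $\widetilde{X}|_U\cong U\times \mathbb{A}^1_\chi$ with $E_0|_U=\{\chi=0\}$. This immediately gives $\ord_{E_0}(\chi)=1$ and $\ord_{E_0}(f)=0$, yielding the $uE_0$ contribution.

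The heart of the argument is the coefficient along $\pi_*^{-1}(D_i)$. I would localize at the generic point $\eta_i$, let $R=\mathcal{O}_{Y,\eta_i}$ be the corresponding DVR with uniformizer $t$, and recognize the stalk
\[
A \;=\; \bigoplus_{n\geq 0} t^{-\lfloor np_i/q_i\rfloor}R\cdot\chi^n
\]
as the affine semigroup algebra $k[\sigma^\vee\cap \mathbb{Z}^2]$ of the two-dimensional toric cone $\sigma^\vee=\operatorname{cone}((1,0),(-p_i,q_i))$, under the identification $t\leftrightarrow(1,0)$, $\chi\leftrightarrow(0,1)$. Since $\gcd(p_i,q_i)=1$, the dual cone $\sigma$ has exactly two rays, with primitive generators $v_1=(0,1)$ and $v_2=(q_i,p_i)$. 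The first is the cocharacter of our acting $k^*$, so the corresponding toric divisor is $E_0$; a direct computation of the intersection of the two one-parameter subgroups shows that the generic-point stabilizer of $D_{v_2}$ under our $k^*$ is $\mu_{q_i}$, and Lemma~\ref{lemma:isotropy} then identifies $D_{v_2}$ with $\pi_*^{-1}(D_i)$. Applying the standard toric divisor-of-a-character formula $\ord_{D_\rho}(\chi^m)=\langle m,v_\rho\rangle$ with $m=(\ord_{D_i}(f),u)$ yields
\[
\ord_{\pi_*^{-1}(D_i)}(f\chi^u)=\langle (\ord_{D_i}(f),u),(q_i,p_i)\rangle=q_i\!\left(\tfrac{p_i}{q_i}u+\ord_{D_i}(f)\right),
\]
which is the claimed coefficient. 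The analogous computation with $(p,q)=(0,1)$ at any prime of $Y$ outside $\supp(D)$ recovers the ordinary pullback coefficient $\ord_D(f)$, which is implicit in the statement.

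The principal obstacle is the toric identification at $\eta_i$: correctly matching the two rays of $\sigma$ with $E_0$ and $\pi_*^{-1}(D_i)$. Once this is justified via Lemma~\ref{lemma:isotropy}, the divisor computation reduces to the standard character-divisor formula on an affine toric surface, and linearity in $f$ and $u$ assembles the local pieces into the global statement.
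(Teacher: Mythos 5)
Your argument is correct. Note that the paper gives no proof of this proposition at all --- it is quoted from Petersen--S\"uss \cite[Proposition 3.14]{PS11} --- so there is nothing internal to compare against; your computation is essentially the standard one behind that reference: reduce to the $k^*$-invariant prime divisors of $\widetilde{X}$, get the coefficient of $E_0$ over an open set where $\mathcal{A}(D)$ trivializes to $\mathcal{O}_U[\chi]$, and get the coefficient over the generic point $\eta_i$ of $D_i$ by identifying the local model with the affine toric surface of the cone spanned by $(0,1)$ and $(q_i,p_i)$ (the rank-one case of the local toric models for polyhedral divisors in \cite{AH06}). Three small remarks. First, the stalk $A=\bigoplus_{n\geq 0}t^{-\lfloor np_i/q_i\rfloor}R\,\chi^n$ is a semigroup algebra over the DVR $R$ (equivalently, after completion, over the residue field $k(D_i)$) rather than over $k$; the character-divisor formula $\ord_{D_\rho}(\chi^m)=\langle m,v_\rho\rangle$ is insensitive to this, but you should say so. Second, your appeal to Lemma~\ref{lemma:isotropy} to decide which ray carries $\pi_*^{-1}D_i$ is more than is needed: since $\ord_{D_{v_2}}(t)=q_i>0$ while $E_0$ dominates $Y$ (so every nonzero element of $k(Y)$ has order $0$ along it), $D_{v_2}$ is the unique invariant divisor of the local model lying over the closed point of $\Spec\mathcal{O}_{Y,\eta_i}$, hence is $\pi_*^{-1}D_i$, and $D_{v_1}$ is the one dominating the base, hence $E_0$; this also reconfirms $\ord_{E_0}(f\chi^u)=u$ from the toric side. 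Third, you are right to flag that the displayed sum is literally complete only if the $D_i$ are taken to exhaust $\supp(D)\cup\supp(\divv_Y(f))$, allowing coefficients $0/1$; your computation with $(p,q)=(0,1)$ covers exactly those extra components, and the descent to $X$ (dropping $uE_0$ because $r$ contracts $E_0$ to the vertex, which has codimension at least two) is as you say.
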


\begin{proposition}\label{prop:comparison of ld}
Let $x\in X$ be a Kawamata log terminal cone singularity corresponding to the ample $\qq$-Cartier $\qq$-divisor $D$ on the projective variety $Y$.
There exist a boundary divisor $B$ on $Y$ such that for each divisorial valuation $E$ over $Y$ we have
\[
a_{E_X}(K_X) = W_{E}(D) a_{E}(K_Y+B).
\]
Moreover, the divisor $-(K_Y+B)$ is an ample $\qq$-Cartier $\qq$-divisor.
\end{proposition}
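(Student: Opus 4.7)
The plan is to take $B:=\sum_{i=1}^{k}\left(1-\tfrac{1}{q_i}\right)D_i$, which is manifestly effective, and verify both conclusions by a direct computation. The key tools are Lemma~\ref{lemma:canonical}, Proposition~\ref{prop:div(fchiu)}, and Proposition~\ref{prop:cartier is principal}. Since $X$ is klt, some multiple $nK_X$ is Cartier and $k^*$-invariant, hence principal by Proposition~\ref{prop:cartier is principal}: one has $nK_X=\ddivv_X(f\chi^u)$ for some $f\in k(Y)$ and $u\in M$. Everything that follows is bookkeeping around this identity.

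I first show that $K_Y+B$ is $\qq$-Cartier. The formula of Proposition~\ref{prop:div(fchiu)} extends to every prime divisor $E$ of $Y$---with the convention $p_E=0$, $q_E=1$ when $E\notin\supp(D)$---giving $\ord_{\pi^{-1}_*E}(f\chi^u)=q_E\ord_E(f)+up_E$. Descending the $k^*$-invariant Weil divisor $\ddivv_X(f\chi^u)$ along the good quotient $\pi\colon X\setminus\{x\}\to Y$ divides each coefficient by $q_E$ (since $\pi^*E=q_E\pi^{-1}_*E$), and via the identification $K_X=\pi^*(K_Y+B)$ on $X\setminus\{x\}$ this reads
\[
n(K_Y+B)\;=\;\ddivv_Y(f)+u\,D.
\]
In particular $K_Y+B\sim_\qq (u/n)D$ is $\qq$-Cartier. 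For ampleness of $-(K_Y+B)$ I compute $a_{E_0}(X,0)$: Lemma~\ref{lemma:canonical} and Proposition~\ref{prop:div(fchiu)} give $\operatorname{coeff}_{E_0}(K_{\widetilde X})=-1$ and $\operatorname{coeff}_{E_0}(r^*(nK_X))=u$, so $a_{E_0}(X,0)=-u/n$. The klt hypothesis forces $u<0$, so $-(K_Y+B)\sim_\qq(-u/n)D$ is ample.

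For the log discrepancy identity I pass to a birational model $\phi\colon Y'\to Y$ containing $E$ as a prime divisor and form the analogous cone data $\widetilde{X'}\to Y'$ from $\phi^*D$; let $\rho\colon\widetilde{X'}\to X$ denote the induced birational morphism. Then $E_X=\pi'^{-1}_*E$, and writing $p_E/q_E$ (in lowest terms) for the coefficient of $E$ in $\phi^*D$, the $Y'$-analogues of Lemma~\ref{lemma:canonical} and Proposition~\ref{prop:div(fchiu)} give
\[
\operatorname{coeff}_{E_X}(K_{\widetilde{X'}})=q_E\operatorname{coeff}_E(K_{Y'})+(q_E-1),
\]
\[
\operatorname{coeff}_{E_X}(\rho^*K_X)=\tfrac{1}{n}\bigl(q_E\ord_E(f)+up_E\bigr).
\]
Subtracting and simplifying yields
\[
a_{E_X}(X,0)=q_E\left(1+\operatorname{coeff}_E(K_{Y'})-\tfrac{1}{n}\bigl(\ord_E(f)+u\,p_E/q_E\bigr)\right).
\]
On the other side, pulling $K_Y+B\sim_\qq(u/n)D$ back by $\phi$ gives $\operatorname{coeff}_E(\phi^*(K_Y+B))=\tfrac{1}{n}(\ord_E(f)+up_E/q_E)$, and hence $W_E(D)\,a_E(K_Y+B)=q_E\bigl(1+\operatorname{coeff}_E(K_{Y'})-\operatorname{coeff}_E(\phi^*(K_Y+B))\bigr)$ coincides with $a_{E_X}(X,0)$.

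The principal technical obstacle is the bookkeeping in this last step: the Weil index $q_E$ plays two roles simultaneously---as the ramification factor in $\pi'^*E=q_E\pi'^{-1}_*E$ and as the denominator of the coefficient of $E$ in $\phi^*D$---and one must check that Proposition~\ref{prop:div(fchiu)} extends cleanly to every prime divisor of $Y'$, not only to those appearing in the support of $\phi^*D$.
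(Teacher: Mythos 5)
Your proof is correct and follows essentially the same route as the paper: the same choice $B=\sum_{i=1}^k\left(1-\tfrac{1}{q_i}\right)D_i$, the same key identity $n(K_Y+B)=\operatorname{div}_Y(f)+uD$ descended from $nK_X=\operatorname{div}_X(f\chi^u)$, and the same coefficient computation for $E_X$ on the model $\widetilde{X}'$ built from $\phi^*D$ over $Y'$. The one point where you go beyond the paper is the explicit verification that $u$ and $n$ have opposite signs via $a_{E_0}(K_X)=-u/n>0$, which the paper asserts without justification; this is a useful clarification but not a different argument.
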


\begin{proof}
Since the cone singularity $x\in X$ is Kawamata log terminal then $K_X$ is $\qq$-Cartier
and $k^*$-invariant.
Hence, by Proposition~\ref{prop:cartier is principal}, we know that we can write
\begin{equation}\label{eqforkx}
mK_X = {\rm div}_{X}(f \chi^u),
\end{equation}
where $f$ is a rational function on $Y$ and $u \in M$. 
Pushing-forward the divisor $mK_X$ to $Y$ via $\pi$, 
and considering equation~\eqref{eqforkx},
Lemma~\ref{lemma:canonical}, and Proposition~\ref{prop:div(fchiu)}, we obtain
\begin{equation}\label{relonY}
m(K_Y+B) = {\rm div}_Y(f) + uD,
\end{equation}
where $B=\sum_{i=1}^k \frac{q_i-1}{q_i} D_i$ is an effective $\qq$-divisor.
Moreover, since $H:={\rm div}_Y(f)$ is principal we deduce that $(Y,B)$ is a pair
and $-(K_Y+B)$ is an ample $\qq$-divisor since
$u$ and $m$ are integers of opposite sign. 
Hence, it suffices to prove the equality relating the log discrepancies of $(Y,B)$
with those of $X$.
Let $f\colon Y'\rightarrow Y$ a projective birational morphism
from a normal projective variety $Y'$ and $E$ a prime divisor on $Y'$.
Observe that the cone singularity corresponding to the couple $(Y,D)$
is equal to the cone singularity corresponding to the couple $(Y', f^*(D))$ (see Remark~\ref{minimal}).
Let $\widetilde{X}'$ be the relative spectrum on $Y'$ of the divisorial sheaf
\[
\bigoplus_{m\geq 0} \mathcal{O}_{Y'}(m f^*(D)).
\]
Observe that the center of $E_X$ on $\widetilde{X}'$ is just the 
strict transform of $E$ on $\widetilde{X}'$.
Thus, we have a commutative diagram as follows:
\[
 \xymatrix{
  \widetilde{X}' \ar[r]^-{\widetilde{f}}\ar[d]_-{\pi'}   & \widetilde{X}\ar[r]^-{r} \ar[d]_-{\pi} & X \ar@{-->}[ld]^-{\pi} \\
  Y' \ar[r]^-{f} & Y & 
 }
\]
By Proposition~\ref{prop:div(fchiu)}, we have that
\[
{\rm coeff}_{E_X}( \widetilde{f}^*(r^*(K_X))) = \frac{{\rm div}_{\widetilde{X}'}(f\chi^u)}{m} = \frac{W_E(D)}{m} 
\left(
u {\rm coeff}_E (f^*(D)) + {\rm coeff}_E(f^*(H))
\right).
\]
On the other hand, by Lemma~\ref{lemma:canonical} we have that
\[
{\rm coeff}_{E_X}(K_{\widetilde{X}'}) = W_E(D){\rm coeff}_E(K_Y) + W_E(D)-1.
\]
Hence, the log discrepancy of $K_X$ is given by
\[
a_{E_X}(K_X) = W_E(D) \left(
{\rm coeff}_E(K_Y) - \frac{u}{m} {\rm coeff}_E(f^*(D)) - \frac{1}{m}{\rm coeff}_E(f^*(H))+1
\right).
\]
From equation~\eqref{relonY}, we deduce that
\[
\frac{u}{m} {\rm coeff}_E(f^*(D)) - \frac{1}{m}{\rm coeff}_E(f^*(H)) = - {\rm coeff}_E( f^*(K_Y+B)),
\]
so we can write 
\[
a_{E_X}(K_X) =  W_E(D)\left(  {\rm coeff}_E(K_Y) - {\rm coeff}_E(f^*(K_Y+B)) +1 \right) = W_E(D)a_E(Y,D).
\]
Thus, for any divisor $E$ over $Y$ we get the relation
\[
a_{E_X}(K_X) = W_E(D)a_E(K_Y+B).
\]
\end{proof}

\begin{definition}
{\em 
A log pair $(Y,B)$ is said to be {\em log Fano} if it is klt and $-(K_Y+B)$ is an ample $\qq$-Cartier $\qq$-divisor.
In what follows, we may call $(Y,B)$ the {\em log Fano quotient} of the cone singularity.
Observe that the log Fano quotient $(Y,B)$ of a cone singularity $x\in X$ may not be equal
to the corresponding couple $(Y,D)$.
}
\end{definition}

The following remark relates the log Fano quotient and the corresponding couple
of a klt cone singularity.

\begin{remark}\label{rem:structure of B}
{\em 
Let $x\in X$ be a cone singularity corresponding to the ample $\qq$-Cartier $\qq$-divisor $D$ on the projective variety $Y$.
The boundary divisor on $Y$ associated to the log Fano quotient of the cone singularity is
\[
B=\sum_{i=1}^k \left( 1- \frac{1}{q_i}\right) D_i,
\]
where the $q_i$'s are as in Notation~\ref{notation widetilde}.
This means that the log Fano quotient of a cone singularity has a bondary with standard coefficients in the sense of~\cite[Definition 2.4]{Mor18}.
Observe that the log Fano quotient of a cone singularity is uniquely determined by the pair $(Y,D)$.
On the other hand, the isomorphism class of $x\in X$ is determined by the couple $(Y,D)$ up to isomorphisms in the first component
and linear equivalence on the second component, i.e. if $D_1$ and $D_2$ are two $\qq$-Cartier $\qq$-divisors
such that $D_1\sim D_2$, then the couples $(Y,D_1)$ and $(Y,D_2)$ determine isomorphic cone singularities (see, e.g.~\cite[Proposition 8.6]{AH06}).
Here, we say that two $\qq$-divisors $D_1$ and $D_2$ are linearly equivalent if $D_1-D_2$ is a principal divisor on $Y$.
}
\end{remark}

\begin{corollary}\label{prop:e/N-log canonical}
Let $\epsilon$ be a positive real number and $N$ be a positive integer.
Let $x\in X$ be an $\epsilon$-log canonical cone singularity with isotropies bounded by $N$, 
then its log Fano quotient $(Y,B)$ is $\frac{\epsilon}{N}$-log canonical.
\end{corollary}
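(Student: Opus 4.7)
The plan is to combine the log-discrepancy comparison formula with the isotropy bound, using the key dictionary between isotropies and Cartier indices provided by Lemma~\ref{lemma:isotropy}.

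First, I would apply Proposition~\ref{prop:comparison of ld} to produce the log Fano quotient $(Y,B)$ together with the identity
\[
a_{E_X}(K_X) = W_E(D)\, a_E(K_Y+B)
\]
for every divisorial valuation $E$ over $Y$. The hypothesis that $x\in X$ is $\epsilon$-log canonical gives $a_{E_X}(K_X)\ge\epsilon$, so rearranging yields
\[
a_E(K_Y+B) \ge \frac{\epsilon}{W_E(D)}.
\]
Thus it suffices to show $W_E(D)\le N$ for every divisorial valuation $E$ over $Y$.

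For the bound on $W_E(D)$, I would split into two cases. If $E$ is a non-exceptional prime divisor, then by definition $W_E(D)=q_E$, which by Lemma~\ref{lemma:isotropy} equals the order of the isotropy group at any point of $\pi^{-1}(E)\setminus\{x\}$, and hence is at most $N$ by the isotropy hypothesis. If $E$ is exceptional over $Y$ with center at a point $y\in Y$, Proposition~\ref{prop:weil index vs cartier index} tells us that $W_E(D)$ is bounded by the (local) Cartier index of $D$ at $y$; by Lemma~\ref{lemma:isotropy} this Cartier index equals the order of the isotropy group of $k^*$ at any point $x_0\in X$ with $\pi(x_0)=y$, and so is again at most $N$.

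Combining the two cases gives $W_E(D)\le N$ uniformly, hence $a_E(K_Y+B)\ge \epsilon/N$ for every divisorial valuation over $Y$, which is precisely the statement that $(Y,B)$ is $\epsilon/N$-log canonical. The only delicate point, and the step where I would take the most care, is verifying that the Weil-index versus Cartier-index comparison in Proposition~\ref{prop:weil index vs cartier index} may be applied locally near the center of $E$, so that the local Cartier index at $y$ (rather than the global one) is what controls $W_E(D)$; this is the natural reading of that proposition and is exactly what is needed for the isotropy bound from Lemma~\ref{lemma:isotropy} to apply.
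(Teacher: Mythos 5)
Your proof is correct and follows essentially the same route as the paper: combine the log-discrepancy formula of Proposition~\ref{prop:comparison of ld} with the bound $W_E(D)\le N$ obtained from Proposition~\ref{prop:weil index vs cartier index} and Lemma~\ref{lemma:isotropy}. Your case split for non-exceptional versus exceptional $E$, and your remark that the Weil-index/Cartier-index comparison must be applied \emph{locally} at the center of $E$ (since the isotropy bound only controls the local Cartier index at each point, not its global lcm), addresses a point the paper's proof glosses over, so your version is if anything slightly more careful.
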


\begin{proof}
Let $(Y,D)$ the couple corresponding to the cone singularity $x\in X$,
and let $E$ be a prime divisor over $Y$.
By Proposition~\ref{prop:weil index vs cartier index},
we know that the Weil index of $D$ at $E$ is at most the Cartier index of $D$.
On the other hand, by Proposition~\ref{lemma:isotropy}, we know that the Cartier index of $D$ is bounded by $N$.
Therefore, we have the inequality $W_E(D)\leq N$ for any prime divisor $E$ over $Y$.
By Proposition~\ref{prop:comparison of ld}, we obtain
\[
a_E(K_Y+B) = \frac{ a_{E_X}(K_X)}{ W_E(D)} \geq \frac{\epsilon}{N}.
\]
\end{proof}

\subsection{Bounded families of Fano varieties}
In this subsection, we recall the boundedness of Fano varieties due to Birkar~\cite{Bir16a,Bir16b}
and a result about the Neron-Severi space on families of Fano varieties due to Hacon and Xu (see~\cite[Proposition 2.8]{HX15}).

\begin{definition}{\em 
We say that a class of schemes $\mathcal{C}$ is {\em bounded} if there exists a morphism $\phi \colon \mathcal{X} \rightarrow T$
between two schemes of finite type such that every sceheme on the class $\mathcal{C}$ appears as
a geometric fiber of $\phi$.

If the class of schemes $\mathcal{C}$ is a class of projective varieties we will also require that $\phi$ is a projective morphism
between possibly reducible quasi-projective varieties.

If $\mathcal{C}$ is a class of couples $(Y,B)$ we say that it is {\em log bounded} if there 
exists a projective morphism $\phi\colon \mathcal{X}\rightarrow T$ between quasi-projective varieties
and a $\qq$-divisor $\mathcal{B} \subset \mathcal{X}$ such that for each $(Y,B)\in \mathcal{C}$
there exists a closed point $t\in T$ and an isomorphism $Y \simeq \mathcal{X}_t$ so that
the support of $B$ is contained in the support of $\mathcal{B}_t$ under this isomorphism.
Moreover, we say that the class of couples $(Y,B)$ is {\em log bounded with coefficients} or {\em strictly log bounded}
if the isomorphism $Y\simeq \mathcal{X}_t$ induces an isomorphism of $B$ and $\mathcal{B}_t$ with their corresponding coefficients.	
We will call $\phi\colon \mathcal{X}\rightarrow T$ the {\em bounding family} and $\mathcal{B}\subset \mathcal{X}$ the {\em bounding divisor}.

We say that a class $\mathcal{C}$ of couples $(Y,D)$ is {\em log bounded with coefficients up to linear equivalence}
or {\em strictly log bounded up to linear equivalence},
if there exists a class of couples $\mathcal{C}'$ which is log bounded with coefficients, such that for
each $(Y,D)\in \mathcal{C}$ there exists $(Y',D')\in \mathcal{C}'$ and an isomorphism $f\colon Y\rightarrow Y'$
for which $f^*(D')\sim D$, or equivalently, $D' \sim f_*(D)$.

Finally, we say that a class of singularities $\mathcal{C}$ is {\em bounded}, if for each $x\in X$ belonging to $\mathcal{C}$ 
we can find an affine neighborhood of $x\in U\subset X$, so that the schemes $U$ are bounded in the above sense.
}
\end{definition}

The following proposition is a consequence of the functoriality of polyhedral divisors (see, e.g.~\cite[Proposition 8.6]{AH06}).

\begin{proposition}\label{boundedness of cones}
A class of cone singularities is bounded if the class of corresponding couples 
is strictly log bounded up to linear equivalence.
\end{proposition}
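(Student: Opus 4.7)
My approach is to perform the Demazure construction of Theorem~\ref{theorem:demazure} relatively, turning a bounding family of couples into a bounding family of cone singularities. By Remark~\ref{rem:structure of B}, the isomorphism class of the cone singularity $x\in X$ associated to a couple $(Y,D)$ depends only on the isomorphism class of $Y$ together with the linear equivalence class of $D$. Consequently, if the class of couples is strictly log bounded up to linear equivalence, I may replace each couple in our class by a representative drawn from a strictly log bounded family $\phi\colon \mathcal{Y}\to T$ with bounding divisor $\mathcal{D}\subset\mathcal{Y}$, without affecting the isomorphism class of the resulting cone singularity. It therefore suffices to assemble the fiberwise cone singularities $\{ \Spec \bigoplus_{n\geq 0} H^0(\mathcal{Y}_t, \mathcal{O}_{\mathcal{Y}_t}(n\mathcal{D}_t)) \}_{t\in T}$ into a single scheme of finite type over $T$.

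After passing to a Noetherian stratification of $T$ (which only splits the bounding family into finitely many pieces and so preserves boundedness), and using the minimal model described in Remark~\ref{minimal}, I may further assume that $\mathcal{D}$ is $\mathbb{Q}$-Cartier relative to $T$, that a fixed multiple $n_0\mathcal{D}$ is Cartier and relatively very ample on $\mathcal{Y}/T$, and, by cohomology and base change, that each $\phi_*\mathcal{O}_{\mathcal{Y}}(n\mathcal{D})$ is locally free of constant rank and commutes with arbitrary base change. Then I would form the graded sheaf of $\mathcal{O}_T$-algebras
\[
\mathcal{R} \;:=\; \bigoplus_{n\geq 0} \phi_*\mathcal{O}_{\mathcal{Y}}(n\mathcal{D})
\]
and set $\psi\colon \mathcal{X} := \underline{\Spec}_T(\mathcal{R}) \to T$. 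Base change to a closed point $t\in T$ yields
\[
\mathcal{X}_t \;=\; \Spec \bigoplus_{n\geq 0} H^0(\mathcal{Y}_t, \mathcal{O}_{\mathcal{Y}_t}(n\mathcal{D}_t)),
\]
which by Theorem~\ref{theorem:demazure} and Proposition~\ref{proposition:cone-singularity} is precisely the cone singularity corresponding to $(\mathcal{Y}_t,\mathcal{D}_t)$; the vertices are cut out by the irrelevant ideal and assemble into a canonical section $T\to\mathcal{X}$.

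The main obstacle is ensuring that $\mathcal{X}\to T$ is of finite type, i.e.\ that $\mathcal{R}$ is a finitely generated sheaf of $\mathcal{O}_T$-algebras. Fiberwise this is classical: since $\mathcal{D}_t$ is ample on the projective $\mathcal{Y}_t$, the corresponding section ring is finitely generated, with generators of bounded degree. To make this bound uniform across $T$, I would use that after stratification $n_0\mathcal{D}$ is relatively very ample, which allows generators of $\mathcal{R}$ over $\mathcal{O}_T$ to be chosen of degree dividing $n_0$. This is exactly the content of the functoriality of polyhedral divisors from~\cite[Proposition 8.6]{AH06}, which packages the relative construction cleanly; feeding the bounded family of couples into that machine produces the required bounding family $\psi\colon\mathcal{X}\to T$ of cone singularities, with the vertex marked by the canonical section.
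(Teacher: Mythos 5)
Your proposal is correct and follows essentially the same route as the paper: both form the relative spectrum of the graded $\mathcal{O}_T$-algebra $\bigoplus_{n\geq 0}\phi_*\mathcal{O}(n\mathcal{D})$ over the base of the bounding family of couples and identify its fibers with the cone singularities via Demazure's construction, using that linear equivalence of the divisor does not change the isomorphism class of the cone. You are merely more explicit than the paper about the steps it asserts ``by construction'' --- stratifying $T$ to get cohomology and base change and uniform finite generation of the section algebra --- which is a refinement, not a different argument.
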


\begin{proof}
Let $\mathcal{C}_X$ be a class of cone singularities and $\mathcal{C}_Y$ be the corresponding class of couples.
By~\cite[Theorem 3.10]{Ale94}, it suffices to prove that for every sequence $x_i\in X_i$ of cone singularities there exists an infinite sub-sequence which is bounded.
Let $(Y_i,D_i)$ be the corresponding sequence of couples, where $Y_i$ is projective and $D_i$ is an ample $\qq$-Cartier $\qq$-divisor on $Y_i$.
Let $\phi \colon \mathcal{X}\rightarrow T$ and $\mathcal{D}\subset \mathcal{X}$ be a family and a divisor realizing the strictly log boundedness of $(Y_i,D_i)$ 
up to linear equivalence.
Passing to a subsequence, we may assume that the points $t_i$ corresponding to the couples $(Y_i,D_i)$ are dense on $T$.
Now, we consider the variety 
\[
\mathcal{X}_X :={\rm Spec} \left( \bigoplus_{m\geq 0} H^0(\mathcal{X}/T, m \mathcal{D})\right),
\]
which has an structure morphism $\phi_X \colon \mathcal{X}_X \rightarrow T$.
By construction we have isomorphisms
\[
\mathcal{X}_{X,t_i} \simeq {\rm Spec} \left( \bigoplus_{m\geq 0}H^0(\mathcal{X}_{t_i}, m\mathcal{D}_{t_i} ) \right)
\simeq 
{\rm Spec} \left( \bigoplus_{m\geq 0}H^0(Y_i, mD_i) \right)
\simeq 
X_t.
\]
Therefore, we conclude that the morphism $\phi_X \colon \mathcal{X}_X \rightarrow T$ is a bounding family for the cone singularities $x_i\in X_i$ 
which belong to the class $\mathcal{C}_X$.
\end{proof}

\begin{theorem}\label{theorem:bab}
Let $d$ be a positive integer and $\epsilon$ a positive real number.
The set of varieties $Y$ for which the following conditions hold:
\begin{itemize}
\item $Y$ is a projective variety of dimension $d$,
\item There exists a boundary divisor $B$ on $Y$ such that $(Y,B)$ is an $\epsilon$-log canonical pair, and
\item the $\qq$-Cartier $\qq$-divisor $-(K_Y+B)$ is ample,
\end{itemize}
forms a bounded family.
\end{theorem}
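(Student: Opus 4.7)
The statement is exactly the form of Birkar's solution to the Borisov--Alexeev--Borisov conjecture as stated in \cite{Bir16b}, so my plan is to quote that result and extract the underlying variety. Specifically, \cite[Theorem 1.1]{Bir16b} asserts that the class of $\epsilon$-log canonical log Fano pairs $(Y,B)$ of fixed dimension $d$ is log bounded. That is, there is a projective morphism $\phi\colon\mathcal{X}\to T$ between quasi-projective varieties and a $\qq$-divisor $\mathcal{B}\subset\mathcal{X}$ such that every such pair appears as a geometric fiber $(\mathcal{X}_t,\mathcal{B}_t)$ in the sense that the support of $B$ is contained in the support of $\mathcal{B}_t$ under the isomorphism $Y\simeq \mathcal{X}_t$.

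The first step is to verify that each $Y$ in our class is the underlying variety of such a log Fano pair. But this is immediate: the hypotheses are precisely that there exists a boundary $B$ on $Y$ making $(Y,B)$ an $\epsilon$-log canonical pair with $-(K_Y+B)$ ample, i.e.\ an $\epsilon$-log canonical log Fano pair of dimension $d$. Hence each such $Y$ is the underlying variety of an object to which \cite[Theorem 1.1]{Bir16b} applies.

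The second step is to forget the boundary. Given the bounding family $\phi\colon\mathcal{X}\to T$ produced by Birkar's theorem, the same morphism $\phi$ is a bounding family for the class of underlying varieties $Y$, since every $Y$ in our class is isomorphic to a geometric fiber $\mathcal{X}_t$. This verifies boundedness in the sense of the earlier definition.

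There is no genuine obstacle here because the heavy lifting is done by \cite{Bir16b}; the only subtlety worth noting is that we are using the version of the statement in which the boundary $B$ is allowed to vary (the \emph{relative} form of BAB, sometimes called BAB with a boundary), as opposed to the special case $B=0$. Both versions are established in Birkar's work, and the version we need is the one recorded in \cite[Theorem 1.1]{Bir16b}.
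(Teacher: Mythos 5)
Your proposal is correct and matches the paper exactly: the paper states this theorem without proof, as a recollection of Birkar's boundedness theorem for $\epsilon$-log canonical log Fano pairs \cite{Bir16a,Bir16b}, which is precisely the citation you invoke. The only content beyond the citation is the observation that boundedness of the pairs implies boundedness of the underlying varieties, which you handle correctly.
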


We are also interested to bound the pairs $(Y,B)$.
In order to do so, we also need to impose a condition on the coefficients of $B$ as the following corollary shows.

\begin{corollary}\label{log boundedness}
Let $d$ be a positive integer, $\epsilon$ a positive real number and $\mathcal{R}$ a set of rational numbers satisfying the descending chain condition.
The set of pairs $(Y,B)$ for which the following conditions hold:
\begin{itemize}
\item $Y$ is a projective variety of dimendion $d$,
\item the pair $(Y,B)$ is $\epsilon$-log canonical,
\item the $\qq$-Cartier $\qq$-divisor $-(K_Y+B)$ is ample, and
\item the coefficients of $B$ belong to $\mathcal{R}$.
\end{itemize}
forms a log bounded family.
Moreover, if $\mathcal{R}$ is finite, then the pairs $(Y,B)$ forms a strictly log bounded family.
\end{corollary}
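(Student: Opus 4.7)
The plan is to combine Theorem~\ref{theorem:bab} with a uniform degree bound on $B$ coming from the DCC hypothesis. By Theorem~\ref{theorem:bab}, the underlying varieties $Y$ form a bounded family, so after replacing $T$ by a finite stratification and passing to irreducible components, there is a projective morphism $\phi\colon \mathcal{Y}\rightarrow T$ of finite type together with a relatively very ample divisor $\mathcal{A}$ on $\mathcal{Y}$ such that every $Y$ in our class appears as some fiber $\mathcal{Y}_t$, and $A_t := \mathcal{A}|_{\mathcal{Y}_t}$ is very ample of bounded self-intersection $A_t^d \leq M$.

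Next I exploit the DCC hypothesis. Since $\mathcal{R}$ satisfies the descending chain condition, its strictly positive elements are bounded away from $0$ by some $\delta > 0$. Because $-(K_Y+B)$ is ample, one has
\[
A_t^{d-1}\cdot B \;\leq\; -\,A_t^{d-1}\cdot K_{Y_t},
\]
and after a further finite stratification of $T$ the right-hand side becomes locally constant in $t$, hence uniformly bounded above by a constant $C=C(d,\epsilon)$ depending only on the bounding family. This simultaneously controls the number of prime components of $B$ and the $A_t$-degree of each component, since every such component contributes at least $\delta$ times a positive integer to $A_t^{d-1}\cdot B$.

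With the $A_t$-degrees of the components of $B$ controlled, each prime component sits in a relative Hilbert scheme of $\mathcal{Y}/T$ corresponding to one of finitely many possible Hilbert polynomials, and this Hilbert scheme is of finite type. Taking the finite union over the bounded number of components and over the finitely many numerical types, I obtain a bounding family $\mathcal{X}\rightarrow T'$ together with a bounding divisor $\mathcal{B}\subset \mathcal{X}$ realizing the log boundedness. For the strictly log bounded statement when $\mathcal{R}$ is finite, the number of components of $\mathcal{B}_{t'}$ is uniformly bounded, so there are only finitely many assignments of coefficients from $\mathcal{R}$ to these components; after a further stratification of $T'$ one may equip $\mathcal{B}$ with matching coefficients, yielding the second assertion.

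The main obstacle is making precise the uniform upper bound on $-A_t^{d-1}\cdot K_{Y_t}$, because the varieties $Y_t$ in the BAB family can have varying Cartier indices for $K_{Y_t}$, so $K_{\mathcal{Y}/T}$ need not be $\qq$-Cartier globally on $\mathcal{Y}$. The intended remedy is to invoke the Hacon--Xu N\'eron--Severi constructibility result mentioned just before Theorem~\ref{theorem:bab}, which after a finite stratification of $T$ makes the relevant fiberwise intersection numbers locally constant; alternatively, Noetherian induction on $T$ together with generic $\qq$-Cartier-ness of $K_{\mathcal{Y}/T}$ on dense opens of each stratum gives the same bound.
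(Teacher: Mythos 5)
Your proposal is correct and follows essentially the same route as the paper: bound $A^{d}$ and $(-K_Y)\cdot A^{d-1}$ using Theorem~\ref{theorem:bab}, use ampleness of $-(K_Y+B)$ to get $B\cdot A^{d-1}\leq -K_Y\cdot A^{d-1}$, and use the DCC lower bound $\delta$ on positive coefficients to bound the degree of the reduced support of $B$, finishing with finitely many coefficient assignments when $\mathcal{R}$ is finite. The only difference is cosmetic: where you unwind the last step via relative Hilbert schemes and a stratification to make $-K_{Y_t}\cdot A_t^{d-1}$ locally constant, the paper simply cites Alexeev's boundedness lemma \cite[Lemma 3.7.(2)]{Ale94}, which packages exactly that argument.
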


\begin{proof}
By Theorem~\ref{theorem:bab}, we may find a positive real number $C$ and for each $Y$ as in the statement we may choose a very ample ample Cartier divisor $A_Y$ on $Y$
so that $A_Y^d \leq C$. We may further assume that $(-K_Y) \cdot A^{d-1} \leq C$.
Let $\delta$ be a positive rational number which is smaller than any element of $\mathcal{R}$, 
then we have that 
\[
\frac{1}{\delta} {\rm red}(B) \cdot A^{d-1} \leq B \cdot A^{d-1} \leq -K_Y \cdot A^{d-1} \leq C.
\]
Thus, by~\cite[Lemma 3.7.(2)]{Ale94} we conclude that the pairs $(Y,B)$ are log bounded.
If $\mathcal{R}$ is finite, then the log boundedness of $(Y,B)$ with coefficients follows from the 
log boundedness of $(Y,B)$ by taking all the possible combinations for the coefficients of the bounding divisor on $\mathcal{R}$.
\end{proof}

\begin{definition}{\em

Given a projective morphism $\phi \colon \mathcal{X} \rightarrow T$ we say that $\mathcal{X}$ is of {\em Fano type over $T$}
if there exists a big boundary $\mathcal{B}$ over $T$ on $\mathcal{X}$ such that $(\mathcal{X},\mathcal{B})$ is klt and $K_{\mathcal{X}}+\mathcal{B}\sim_{\qq,T} 0$.
}
\end{definition}

\begin{proposition}\label{finiteness of cox rings}
Let $\phi  \colon \mathcal{X}\rightarrow T$ be a projective morphism such that $\mathcal{X}$ is of Fano type over $T$.
Up to a base change, for every $t\in T$ the following three conditions hold:
\begin{itemize}
\item The restriction morphism $\rho_t \colon N^1(\mathcal{X}/T)\rightarrow N^1(\mathcal{X}_t)$ is an isomorphism, 
\item the restriction morphism induce isomorphisms ${\rm Cox}(\mathcal{X}/T)\simeq {\rm Cox}(\mathcal{X}_t)$,
\item we have that $\rho_t({\rm Mov}(\mathcal{X}/T)) ={\rm Mov}(\mathcal{X}_t)$, and
\item there is a one-to-one correspondence between the two Mori chamber decompositions.
\end{itemize}
\end{proposition}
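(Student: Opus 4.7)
The plan is to reduce to the standard deformation-invariance statements for Fano type fibrations via Noetherian induction on $T$, and then to transport the Mori chamber structure from the total space to each fiber using finite generation of the relative Cox ring. First I would observe that, since $\mathcal{X}$ is of Fano type over $T$, the relative version of BCHM implies that $\Cox(\mathcal{X}/T)$ is a finitely generated sheaf of $\mathcal{O}_T$-algebras. After replacing $T$ by a dense open subset---which is all I need, because by Noetherian induction I may treat the complement afterwards and then combine the resulting bounding data---I can arrange that this sheaf of algebras is flat over $T$. Applying generic flatness to a finite set of homogeneous generators, and to a collection of divisor classes $D_1,\dots,D_r$ whose classes generate $N^1(\mathcal{X}/T)$, I may further assume that every pushforward $\phi_{\ast}\mathcal{O}_{\mathcal{X}}(m_1D_1+\cdots+m_rD_r)$ is locally free on $T$ and that its formation commutes with base change.

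The first bullet I would handle next. Injectivity of $\rho_t$ after tensoring with $\rr$ follows from the fact that on a Fano type fibration the relative Picard rank is upper semicontinuous and, after shrinking $T$, constant on fibers. Surjectivity follows because every divisor class on $\mathcal{X}_t$ admits a representative supported on components that extend horizontally over $T$, thanks to the Fano type hypothesis and cone theorem considerations. Finally, after a finite \'etale base change trivializing the monodromy action on $N^1(\mathcal{X}/T)$, the map $\rho_t$ becomes an isomorphism of free abelian groups for every closed point $t\in T$. Once the first bullet is in hand, the second is almost automatic: the homogeneous generators of $\Cox(\mathcal{X}/T)$ restrict to generators of $\Cox(\mathcal{X}_t)$ because the relevant pushforwards commute with base change, and the relations specialize faithfully thanks to flatness.

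The remaining two bullets are then formal consequences of the Mori dream space formalism: on a Fano type variety both the movable cone and the chamber decomposition are determined by the Cox ring via its GIT structure, so the isomorphism of Cox rings combined with the identification $\rho_t$ of N\'eron--Severi lattices transports one decomposition onto the other. The hardest ingredient is the compatibility of the relative Cox ring with specialization at every $t\in T$, not merely at the generic point: for klt Fano type families this rests on Kawamata--Viehweg vanishing applied to $(\mathcal{X},\mathcal{B})$ and on the existence, in families, of the small $\qq$-factorial modifications produced by the relative MMP. These are precisely the ingredients packaged in the Hacon--Xu theorem cited just before the statement, and once they are granted, the entire proposition follows from the bookkeeping outlined above.
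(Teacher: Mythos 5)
Your proposal is correct and follows essentially the same route as the paper: the paper simply cites \cite[Proposition 2.8]{HX15} for the first, third and fourth bullets, and for the second combines finite generation of $\Cox(\mathcal{X}/T)$ from \cite{BCHM10} with injectivity of the restriction of generating sections after shrinking $T$ and surjectivity from \cite[Proposition 2.7]{HX15} or \cite{dFH11} --- exactly the ingredients you defer to at the end. Your intermediate heuristics for the first bullet (semicontinuity of the Picard rank, horizontal extension of divisor classes) would not stand on their own as proofs, but since you ultimately grant the Hacon--Xu result, as the paper itself does, nothing essential is missing.
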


\begin{proof}
The first, third and fourth claims are proved in~\cite[Proposition 2.8]{HX15}.
Since the Cox ring ${\rm Cox}(\mathcal{X}/T)$ is finitely generated (see, e.g.~\cite[Corollary 1.3.2]{BCHM10}),
after shrinking $T$ we may assume that the restriction of the generating sections of the Cox ring ${\rm Cox}(\mathcal{X}/T)$ to $\mathcal{X}_t$ are injective for every $t\in T$.
On the other hand, by~\cite[Proposition 2.7]{HX15} or~\cite[Theorem 1.1]{dFH11}, we know that they are also surjective, proving the second claim.
\end{proof}

\subsection{Examples}\label{subsection: examples}

In this subsection, we give some examples in which the statement of the main theorem
does not hold if we weaken the assumptions on $d,\epsilon$ or $N$.

\begin{example}\label{smooth point}
{\em
Let $(x_0,\dots,x_d)$ be the coordinates of the $(d+1)$-dimensional affine space $k^{d+1}$.
Consider the cone singularity structure on $k^{d+1}$ given by the diagonal action of $k^*$.
In this case, the action has trivial isotropies, e.g. the isotropy at every closed point which is not the origin is trivial
while the isotropy at the origin is $k^*$. Moreover, smooth points are $1$-log canonical and form an unbounded family 
whenever the dimension is not bounded. Hence, the statement of the main theorem
fails if we drop the condition on the dimension.
}
\end{example}

\begin{example}\label{cone over rational curves}{\em 
In this example, we show that the statement of the main theorem fails if we drop the condition on $\epsilon$-log canonical singularities.
Indeed, consider the cone over a rational curve of degree $m$, i.e. the cone singularity given by
\[
X_m := {\rm Spec}\left(
\bigoplus_{n \geq 0 } H^0(\pp^1,\mathcal{O}_{\pp^1}(nmH))
\right)
\]
where $H$ is the class of a point on $\pp^1$.
It is well-known that the log discrepancy at the exceptional divisor obtained by blowing-up the maximal ideal of the vertex of $X_m$ is $\frac{2}{n}$.
Hence, the cone singularities $X_m$ are not $\epsilon$-log canonical for some fixed positive real number $\epsilon$, even if all of them are log canonical.
However, the $k^*$-action given by the grading has trivial isotropies and all these are $2$-dimensional singularities.
This sequence of surfaces singularities give an example in which the main theorem fails if we drop the condition on $\epsilon$.
Indeed, the Cartier index of $K_{X_m}$ is $m$.
Thus, for $m$ unbounded the above sequence of $2$-dimensional log canonical singularities with isotropies bounded by $1$ does not form a bounded family.
}
\end{example}

\begin{example}\label{A_n singularities}
{\em 
In this example, we show that the statement of the main theorem fails if we drop the condition on the isotropies bounded by $N$.
Consider the well-known $A_n$-singularities:
\[
A_n:=\{ (x,y,z) \mid x^2y^2-z^n =0 \} \subset k^3.
\]
We claim that any $k^*$-action on $A_n$ which give it the structure of a cone singularity 
has isotropy greater or equal than $n$ along either the curve $x=0$ or the curve $y=0$.
Indeed, any $k^*$-action on $A_n$ is induced  by a sub-torus of the torus action 
$(k^*)^2$ on $A_n$ given by
\[
(t_1,t_2) \cdot (x,y,z) = (t_1 t_2^n x, t_1^{-1}t_2^n y, t_2^2z).
\]
A subtorus embedding $k^*\hookrightarrow (k^*)^2$ has the form $t\mapsto (t^{a},t^{b})$
for certain integers $a$ and $b$.
Hence, any $k^*$-action on $A_n$ is given by
\begin{equation}\label{action on A_n}
t \cdot (x,y,z) = (t^{a+bn}x, t^{-a+bn}y, t^{2b} z),
\end{equation}
where $a$ and $b$ are integers.
We check that the above action gives $A_n$ a cone singularity structure if and only if $b\neq 0$.
If $b=0$ the action~\eqref{action on A_n} is given by $t\cdot (x,y,z)=(t^ax, t^{-a}y,z)$
and the curve $z=1$ is an orbit which does not contain the origin in its closure.
Hence, we may assume that $b\neq 0$ and therefore 
the action~\eqref{action on A_n} has isotropy 
$-a+bn$ on the curve $x=0$ and isotropy $a+bn$ on the curve $y=0$.
Observe that we have
\[
2|b|n = |2bn| = | (a+bn) + (-a+bn)| \leq |a+bn| + |-a+bn|.
\]
Therefore, for $n$ large enough, either the isotropy at $x=0$ or the isotropy at $y=0$
is getting arbitrarily large.
Observe that the $A_n$ singularities are canonical surface singularities which don't form a bounded family since their algebraic fundamental groups have arbitrarily large order.
}
\end{example}

\section{Proof of boundedness}

\begin{proof}[Proof of Theorem~\ref{boundedness-cone-singularities}]
Let $d$ and $N$ be positive integers and $\epsilon$ a positive real number.
Denote by $\mathcal{C}_{d,\epsilon,N}$ the class of $d$-dimensional $\epsilon$-log canonical cone singularities with isotorpies bounded by $N$.
Denote by $\mathcal{C}^{\rm quot}_{d,\epsilon,N}$ the class of corresponding couples $(Y,D)$ associated to the cone singularities in $\mathcal{C}_{d,\epsilon,N}$.
By Proposition~\ref{boundedness of cones} and~\cite[Theorem 3.1]{Ale94}, it suffices to prove that for every sequence $(Y_i,D_i) \in \mathcal{C}^{\rm quot}_{d,\epsilon,N}$
we can find an infinite sub-sequence which is log bounded with coefficients up to linear equivalence. We denote by $x_i\in X_i$ the corresponding sequence of cone singularities.\\

\textbf{Step 1:} In this step, we prove that the log Fano quotients $(Y_i,B_i)$ of $x_i \in X_i$ belong to a strictly log bounded family which only depends on $d,\epsilon$ and $N$. 
Indeed, by Lemma~\ref{prop:e/N-log canonical}, we know that the log Fano quotient $(Y_i,B_i)$ has $\frac{\epsilon}{N}$-log canonical singularities
and $-(K_{Y_i}+B_i)$ is an ample $\qq$-Cartier $\qq$-divisor.
Moreover, since the coefficients of $B_i$ have the form $1-\frac{1}{n}$ for some positive integer $n$ and the pairs $(Y_i,B_i)$ are $\frac{\epsilon}{N}$-log canonical,
we conclude that $n$ is at most $\frac{N}{\epsilon}$.
Thus, the coefficients of $B_i$ belong to a finite set which only depends on $\epsilon$ and $N$.
Hence, we can apply Corollary~\ref{log boundedness} to deduce that the pairs $(Y_i,B_i)$ belong to a strictly log bounded family which only depends on $d-1, \epsilon$ and $N$.
We denote by $\phi \colon \mathcal{X}\rightarrow T$ the bounding family for the $Y_i$'s and $\mathcal{B}\subset \mathcal{X}$ the bounding divisor for the $B_i$'s.
We denote by $\mathcal{D}_i$ the $\qq$-Cartier $\qq$-divisor on $\mathcal{X}$ such that $(\mathcal{D}_i)|_{t_i} = D_i$.
As in the proof of~\cite[Proposition 2.8]{HX15}, we may assume that all fibers of $\mathcal{X}\rightarrow T$ are $\qq$-factorial.
Observe that this last assumption does not change the isomorphism class of the cone singularities, however it may change the models $\widetilde{X}_i$.\\

\textbf{Step 2:} We denote by $t_i \in T$ the points on the base of the bounding family for which
$(\mathcal{X}_{t_i}, \mathcal{B}_{t_i})\simeq (Y_i,B_i)$ holds.
Up to a base change and shrinking $T$ we may assume that the $t_i$'s are dense on $T$ 
and that ${\rm Cox}(\mathcal{X}/T)\simeq {\rm Cox}(\mathcal{X}_{t})$ for every $t\in T$ (see Proposition~\ref{finiteness of cox rings}).
Shrinking more if needed, we may assume that the points of $N^1(\mathcal{X}/T)$ corresponding to classes of Weil divisors (resp. Cartier divisors)
are identified via $\rho_t$ with the points of $N^1(\mathcal{X}_t)$ corresponding to classes of Weil divisors (resp. Cartier divisors).\\

\textbf{Step 3:}  In this step we will compute the log discrepancy of the divisor $E_i$ contracted by $\widetilde{X}_i \rightarrow X_i$
with respect to the canonical divisor $K_{X_i}$. This computation will be expressed in terms of the log Fano quotient $(Y_i,B_i)$ of the cone singularity.
By the proof of Proposition~\ref{prop:comparison of ld}, we know that we can write
\[
D_i = \frac{m_i}{u_i} (K_{Y_i}+B_i) + \frac{1}{u_i} H_i,
\]
where $m_i$ and $u_i$ are integers of opposite sign,
and $H_i$ is a principal divisor on $Y_i$.
By Lemma~\ref{lemma:canonical} and Proposition~\ref{prop:div(fchiu)}, we conclude that there is an equality
\[
a_{E_i}(K_{X_i}) = \frac{u_i}{m_i}.
\]
Hence, by the assumption on $X_i$ being $\epsilon$-log canonical we deduce that $\frac{m_i}{u_i}\leq \frac{1}{\epsilon}$.\\

\textbf{Step 4:} In this step we prove that the possible classes of $\qq$-linearly equivalence of $\mathcal{D}_i$ on $N^1(\mathcal{X}/T)$ belong to a finite set.
Observe that we have the relation 
\begin{equation}\label{eqqlin}
\mathcal{D}_i \sim_{\qq,T} \frac{m_i}{u_i} (K_{\mathcal{X}/T}+\mathcal{B}_i),
\end{equation}
for each $i$.
Observe that the set 
\[
\left\{ \mathcal{F} \in N^1(\mathcal{X}/T) \mid \mathcal{F} \sim_{\qq,T} r (K_{\mathcal{X}/T}+\mathcal{B}_i) \text{ and } 0\leq r \leq \frac{1}{\epsilon}\right\}
\]
 is a compact subspace of $N^1(\mathcal{X}/T)$.
On the other hand, since $N\mathcal{D}_i$ is a Cartier divisor, the divisors $\mathcal{D}_i$ belong to a lattice inside $N^1(\mathcal{X}/T)$.
From the equation~\eqref{eqqlin} we conclude that there are finitely many possible $\qq$-linearly classes $\mathcal{F}_1,\dots, \mathcal{F}_r$ on $N^1(\mathcal{X}/T)$
for which any $\mathcal{D}_i$ is $\qq$-linearly equivalent to some $\mathcal{F}_j$.
Passing to a subsequence we may assume that all the $\mathcal{D}_i$ are $\qq$-linearly equivalent to each other.\\

\textbf{Step 5:} We prove that the sequence $(\mathcal{X},\mathcal{D}_i)$ is log bounded up to linear equivalence over $T$.
We can write
\[
\mathcal{D}_i = \mathcal{D}_{i,{\rm f}} + \mathcal{D}^{+}_{i,{\rm W}} - \mathcal{D}^{-}_{i, {\rm W}},
\]
where the three divisors are effective, $\mathcal{D}_{i,{\rm f}}$ has coefficients on the interval $(0,1)\cap \zz\left[\frac{1}{N}\right]$,
and the two latter divisors are integral.
Since the pairs $(\mathcal{X},\mathcal{B}_i)$ are strictly log bounded, we conclude that the couples $(\mathcal{X},\mathcal{D}_{i,f})$
are strictly log bounded as well.
Thus, passing to a subsequence we may assume that for each $i$ and $j$ we have that 
\[
\mathcal{D}^{+}_{i,W} - \mathcal{D}^{-}_{i,W} =
\mathcal{D}^{+}_{j,W} - \mathcal{D}^{-}_{j,W}.
\]
Since the Cox ring of $\mathcal{X}$ relative to $T$ is finitely generated, we may find a finite basis 
$E_1,\dots, E_r$ for the effective Weil divisors on $N^1(\mathcal{X}/T)$ up to linear equivalence.
We will denote by $k$ the smallest positive integer such that for every Weil divisor $E$ on $\mathcal{X}$ the multiple $kE$ is Cartier.
For each $i$ we can write
\[
\mathcal{D}_{i,W}^{+} \sim_{T} \sum_{l=1}^r a^+_{i,l} E_l + \sum_{l=1}^r k b^+_{i,l} E_l,
\]
and 
\[
\mathcal{D}_{i,W}^{-} \sim_{T} \sum_{l=1}^r a^{-}_{i,l} E_l + \sum_{l=1}^r k b^{-}_{i,l} E_l,
\]
where the $a^{+}_{i,l}$'s and the $a^{-}_{i,l}$'s are positive integers in the interval $(0,k-1]$,
while the $b^{+}_{i,l}$'s and $b^{-}_{i,l}$'s are positive integers.
Observe that there are finitely many possible Weil divisors
\[
\sum_{l=1}^r a^+_{i,l} E_l  \quad \text{ and } \quad  \sum_{l=1}^r a^{-}_{i,l} E_l,
\]
Hence, passing to a subsequence we may assume that for every $i$ and $j$ we have
\[
\sum_{l=1}^r k b^+_{i,l} E_l - \sum_{l=1}^r k b^{-}_{i,l} E_l \sim_{\qq,T} 
\sum_{l=1}^r k b^+_{j,l} E_l - \sum_{l=1}^r k b^{-}_{j,l} E_l
\]
are two Cartier divisors which are $\qq$-linearly equivalent over $T$, therefore for every $i$ and $j$ we have
\[
\sum_{l=1}^r k b^+_{i,l} E_l - \sum_{l=1}^r k b^{-}_{i,l} E_l \sim_{T} 
\sum_{l=1}^r k b^+_{j,l} E_l - \sum_{l=1}^r k b^{-}_{j,l} E_l.
\]
Thus, for each $i$ we can write
\[
\mathcal{D}_i \sim_{T} \mathcal{D}_{i,{\rm f}} + \left(  \sum_{l=1}^r a^+_{i,l} E_l  - \sum_{l=1}^r a^{-}_{i,l} E_l  \right)
+\left( 
\sum_{l=1}^r k b^+_{i,l} E_l - \sum_{l=1}^r k b^{-}_{i,l} E_l
\right)
\]
where there are finitely many possible $\qq$-divisors 
\[
\mathcal{D}_{i,{\rm f}} + \left(  \sum_{l=1}^r a^+_{i,l} E_l  - \sum_{l=1}^r a^{-}_{i,l} E_l  \right)
\]
and all the Weil divisors 
\[
\sum_{l=1}^r k b^+_{i,l} E_l - \sum_{l=1}^r k b^{-}_{i,l} E_l
\]
are linearly equivalent over $T$ to a fixed integral divisor.
Thus, we deduce that the sequence $(\mathcal{X}, \mathcal{D}_i)$ is strictly log bounded up to linear equivalence over $T$.\\

\textbf{Step 6:} In this step we complete the proof;
we prove that the couples $(Y_i,D_i)$ are strictly log bounded up to linear equivalence.
Indeed, by the fifth step we may pass to a subsequence in which
\[
K_{\mathcal{X}} + \mathcal{D}_i \sim_T K_{\mathcal{X}}+\mathcal{D}_j
\]
for every $i$ and $j$.
Moreover, since the $t_i$'s are dense in $T$ we have that the linear equivalence 
\[
(K_{\mathcal{X}} + \mathcal{D}_i)|_{t_l} \sim (K_{\mathcal{X}}+\mathcal{D}_j)|_{t_l},
\]
holds for all but finitely many $t_l$'s.
Thus, we conclude that for all but finitely many $t_l$'s the linear equivalence
\[
K_{Y_l} + D_l \sim K_{Y_l} + D_{l,1}
\]
holds, where $D_{l,1}=(\mathcal{D}_1)|_{t_l}$.
Since the pairs $(Y_l, D_{l,1})$ are strictly log bounded by
the family $\phi\colon \mathcal{X}\rightarrow T$ and the divisor $\mathcal{D}_1$, 
we conclude that the pairs $(Y_l,D_l)$ are strictly log bounded up to linear equivalence.
\end{proof}

\begin{proof}[Proof of Corollary~\ref{mld}]
This follows from Theorem~\ref{boundedness-cone-singularities}, and the fact that minimal log discrepancies take finitely many values on bounded familes~\cite[\S 2]{Amb99}.
\end{proof}

\begin{proof}[Proof of Corollary~\ref{alg fun grp}]
This follows from Theorem~\ref{boundedness-cone-singularities}, and the upper semi-continuity of the order of algebraic fundamental groups~\cite[Corollary 17]{BKS03}.
\end{proof}

\begin{bibdiv}
\begin{biblist}

%\bib{Ale93}{article}{
%   author={Alexeev, Valery},
%   title={Two two-dimensional terminations},
%   journal={Duke Math. J.},
%   volume={69},
%   date={1993},
%   number={3},
%   pages={527--545},
%   issn={0012-7094},
%   review={\MR{1208810}},
%   doi={10.1215/S0012-7094-93-06922-0},
%}

\bib{Ale94}{article}{
   author={Alexeev, Valery},
   title={Boundedness and $K^2$ for log surfaces},
   journal={Internat. J. Math.},
   volume={5},
   date={1994},
   number={6},
   pages={779--810},
   issn={0129-167X},
   review={\MR{1298994}},
   doi={10.1142/S0129167X94000395},
}
	
\bib{Amb99}{article}{
   author={Ambro, Florin},
   title={On minimal log discrepancies},
   journal={Math. Res. Lett.},
   volume={6},
   date={1999},
   number={5-6},
   pages={573--580},
   issn={1073-2780},
   review={\MR{1739216}},
   doi={10.4310/MRL.1999.v6.n5.a10},
}

\bib{AIPSV12}{article}{
   author={Altmann, Klaus},
   author={Ilten, Nathan Owen},
   author={Petersen, Lars},
   author={S\"{u}\ss , Hendrik},
   author={Vollmert, Robert},
   title={The geometry of $T$-varieties},
   conference={
      title={Contributions to algebraic geometry},
   },
   book={
      series={EMS Ser. Congr. Rep.},
      publisher={Eur. Math. Soc., Z\"{u}rich},
   },
   date={2012},
   pages={17--69},
   review={\MR{2975658}},
   doi={10.4171/114-1/2},
}

%
%\bib{Amb11}{article}{
%   author={Ambro, Florin},
%   title={Basic properties of log canonical centers},
%   conference={
%      title={Classification of algebraic varieties},
%   },
%   book={
%      series={EMS Ser. Congr. Rep.},
%      publisher={Eur. Math. Soc., Z\"{u}rich},
%   },
%   date={2011},
%   pages={39--48},
%   review={\MR{2779466}},
%   doi={10.4171/007-1/2},
%}

\bib{ADHL15}{book}{
   author={Arzhantsev, Ivan},
   author={Derenthal, Ulrich},
   author={Hausen, J\"{u}rgen},
   author={Laface, Antonio},
   title={Cox rings},
   series={Cambridge Studies in Advanced Mathematics},
   volume={144},
   publisher={Cambridge University Press, Cambridge},
   date={2015},
   pages={viii+530},
   isbn={978-1-107-02462-5},
   review={\MR{3307753}},
}

\bib{AH06}{article}{
   author={Altmann, Klaus},
   author={Hausen, J\"{u}rgen},
   title={Polyhedral divisors and algebraic torus actions},
   journal={Math. Ann.},
   volume={334},
   date={2006},
   number={3},
   pages={557--607},
   issn={0025-5831},
   review={\MR{2207875}},
   doi={10.1007/s00208-005-0705-8},
}

\bib{AHS08}{article}{
   author={Altmann, Klaus},
   author={Hausen, J\"{u}rgen},
   author={S\"{u}ss, Hendrik},
   title={Gluing affine torus actions via divisorial fans},
   journal={Transform. Groups},
   volume={13},
   date={2008},
   number={2},
   pages={215--242},
   issn={1083-4362},
   review={\MR{2426131}},
   doi={10.1007/s00031-008-9011-3},
}

\bib{Bir16a}{misc}{
  author = {Birkar, Caucher},
  title={Anti-pluricanonical systems on Fano varieties},
  year = {2016},
  note = {https://arxiv.org/abs/1603.05765v3},
}

\bib{Bir16b}{misc}{
  author = {Birkar, Caucher},
  title={Singularities of linear systems and boundedness of Fano varieties},
  year = {2016},
  note = {https://arxiv.org/abs/1609.05543v1},
}

\bib{Bir17}{misc}{
  author = {Birkar, Caucher},
  title={Birational geometry of algebraic varieties},
  year = {2017},
  note = {https://arxiv.org/abs/1801.00013},
}

\bib{Bro13}{article}{
   author={Brown, Morgan},
   title={Singularities of Cox rings of Fano varieties},
   language={English, with English and French summaries},
   journal={J. Math. Pures Appl. (9)},
   volume={99},
   date={2013},
   number={6},
   pages={655--667},
   issn={0021-7824},
   review={\MR{3055212}},
   doi={10.1016/j.matpur.2012.10.003},
}

%\bib{Bor97}{article}{
%   author={Borisov, Alexandr},
%   title={Minimal discrepancies of toric singularities},
%   journal={Manuscripta Math.},
%   volume={92},
%   date={1997},
%   number={1},
%   pages={33--45},
%   issn={0025-2611},
%   review={\MR{1427666}},
%   doi={10.1007/BF02678179},
%}

\bib{BCHM10}{article}{
   author={Birkar, Caucher},
   author={Cascini, Paolo},
   author={Hacon, Christopher D.},
   author={McKernan, James},
   title={Existence of minimal models for varieties of log general type},
   journal={J. Amer. Math. Soc.},
   volume={23},
   date={2010},
   number={2},
   pages={405--468},
   issn={0894-0347},
   review={\MR{2601039}},
   doi={10.1090/S0894-0347-09-00649-3},
}

\bib{BKS03}{collection}{
   title={Higher dimensional varieties and rational points},
   series={Bolyai Society Mathematical Studies},
   volume={12},
   editor={B\"{o}r\"{o}czky, K\'{a}roly, Jr.},
   editor={Koll\'{a}r, J\'{a}nos},
   editor={Szamuely, Tam\'{a}s},
   note={Papers from the Summer School and Conference held in Budapest,
   September 3--21, 2001},
   publisher={Springer-Verlag, Berlin; J\'{a}nos Bolyai Mathematical Society,
   Budapest},
   date={2003},
   pages={310},
   isbn={3-540-00820-9},
   review={\MR{2011742}},
   doi={10.1007/978-3-662-05123-8},
}

\bib{CLS11}{book}{
   author={Cox, David A.},
   author={Little, John B.},
   author={Schenck, Henry K.},
   title={Toric varieties},
   series={Graduate Studies in Mathematics},
   volume={124},
   publisher={American Mathematical Society, Providence, RI},
   date={2011},
   pages={xxiv+841},
   isbn={978-0-8218-4819-7},
   review={\MR{2810322}},
   doi={10.1090/gsm/124},
}

\bib{Dem88}{article}{
   author={Demazure, Michel},
   title={Anneaux gradu\'{e}s normaux},
   language={French},
   conference={
      title={Introduction \`a la th\'{e}orie des singularit\'{e}s, II},
   },
   book={
      series={Travaux en Cours},
      volume={37},
      publisher={Hermann, Paris},
   },
   date={1988},
   pages={35--68},
   review={\MR{1074589}},
}

\bib{dFH11}{article}{
   author={de Fernex, Tommaso},
   author={Hacon, Christopher D.},
   title={Deformations of canonical pairs and Fano varieties},
   journal={J. Reine Angew. Math.},
   volume={651},
   date={2011},
   pages={97--126},
   issn={0075-4102},
   review={\MR{2774312}},
   doi={10.1515/CRELLE.2011.010},
}

\bib{DS14}{article}{
   author={Donaldson, Simon},
   author={Sun, Song},
   title={Gromov-Hausdorff limits of K\"{a}hler manifolds and algebraic
   geometry},
   journal={Acta Math.},
   volume={213},
   date={2014},
   number={1},
   pages={63--106},
   issn={0001-5962},
   review={\MR{3261011}},
   doi={10.1007/s11511-014-0116-3},
}

\bib{DS16}{article}{
   author={Datar, Ved},
   author={Sz\'{e}kelyhidi, G\'{a}bor},
   title={K\"{a}hler-Einstein metrics along the smooth continuity method},
   journal={Geom. Funct. Anal.},
   volume={26},
   date={2016},
   number={4},
   pages={975--1010},
   issn={1016-443X},
   review={\MR{3558304}},
   doi={10.1007/s00039-016-0377-4},
}
	
%		
%\bib{dFEM10}{article}{
%   author={de Fernex, Tommaso},
%   author={Ein, Lawrence},
%   author={Musta\c{t}\u{a}, Mircea},
%   title={Shokurov's ACC conjecture for log canonical thresholds on smooth
%   varieties},
%   journal={Duke Math. J.},
%   volume={152},
%   date={2010},
%   number={1},
%   pages={93--114},
%   issn={0012-7094},
%   review={\MR{2643057}},
%   doi={10.1215/00127094-2010-008},
%}
%	
%
%\bib{dFKX17}{article}{
%   author={de Fernex, Tommaso},
%   author={Koll\'{a}r, J\'{a}nos},
%   author={Xu, Chenyang},
%   title={The dual complex of singularities},
%   conference={
%      title={Higher dimensional algebraic geometry---in honour of Professor
%      Yujiro Kawamata's sixtieth birthday},
%   },
%   book={
%      series={Adv. Stud. Pure Math.},
%      volume={74},
%      publisher={Math. Soc. Japan, Tokyo},
%   },
%   date={2017},
%   pages={103--129},
%   review={\MR{3791210}},
%}
%
%\bib{Fuj01}{article}{
%   author={Fujino, Osamu},
%   title={The indices of log canonical singularities},
%   journal={Amer. J. Math.},
%   volume={123},
%   date={2001},
%   number={2},
%   pages={229--253},
%   issn={0002-9327},
%   review={\MR{1828222}},
%}

\bib{Ful93}{book}{
   author={Fulton, William},
   title={Introduction to toric varieties},
   series={Annals of Mathematics Studies},
   volume={131},
   note={The William H. Roever Lectures in Geometry},
   publisher={Princeton University Press, Princeton, NJ},
   date={1993},
   pages={xii+157},
   isbn={0-691-00049-2},
   review={\MR{1234037}},
   doi={10.1515/9781400882526},
}

%
%\bib{FM18}{misc}{
%  author = {Filipazzi, Stefano},
%  author = {Moraga,Joaqu\'in}
%  title={Strong $(\delta,n)$-complements for semi-stable morphisms},
%  year = {2018},
%  note = {https://arxiv.org/abs/1810.01990},
%}
%
%\bib{Hac14}{article}{
%   author={Hacon, Christopher D.},
%   title={On the log canonical inversion of adjunction},
%   journal={Proc. Edinb. Math. Soc. (2)},
%   volume={57},
%   date={2014},
%   number={1},
%   pages={139--143},
%   issn={0013-0915},
%   review={\MR{3165017}},
%   doi={10.1017/S0013091513000837},
%}
%
%\bib{Hay99}{article}{
%   author={Hayakawa, Takayuki},
%   title={Blowing ups of $3$-dimensional terminal singularities},
%   journal={Publ. Res. Inst. Math. Sci.},
%   volume={35},
%   date={1999},
%   number={3},
%   pages={515--570},
%   issn={0034-5318},
%   review={\MR{1710753}},
%   doi={10.2977/prims/1195143612},
%}

\bib{GOST15}{article}{
   author={Gongyo, Yoshinori},
   author={Okawa, Shinnosuke},
   author={Sannai, Akiyoshi},
   author={Takagi, Shunsuke},
   title={Characterization of varieties of Fano type via singularities of
   Cox rings},
   journal={J. Algebraic Geom.},
   volume={24},
   date={2015},
   number={1},
   pages={159--182},
   issn={1056-3911},
   review={\MR{3275656}},
   doi={10.1090/S1056-3911-2014-00641-X},
}

\bib{Hay05a}{article}{
   author={Hayakawa, Takayuki},
   title={Divisorial contractions to 3-dimensional terminal singularities
   with discrepancy one},
   journal={J. Math. Soc. Japan},
   volume={57},
   date={2005},
   number={3},
   pages={651--668},
   issn={0025-5645},
   review={\MR{2139726}},
}
	
\bib{Hay05b}{article}{
   author={Hayakawa, Takayuki},
   title={Gorenstein resolutions of 3-dimensional terminal singularities},
   journal={Nagoya Math. J.},
   volume={178},
   date={2005},
   pages={63--115},
   issn={0027-7630},
   review={\MR{2145316}},
   doi={10.1017/S0027763000009120},
}

\bib{HK00}{article}{
   author={Hu, Yi},
   author={Keel, Sean},
   title={Mori dream spaces and GIT},
   note={Dedicated to William Fulton on the occasion of his 60th birthday},
   journal={Michigan Math. J.},
   volume={48},
   date={2000},
   pages={331--348},
   issn={0026-2285},
   review={\MR{1786494}},
   doi={10.1307/mmj/1030132722},
}	

\bib{HK10}{book}{
   author={Hacon, Christopher D.},
   author={Kov\'{a}cs, S\'{a}ndor J.},
   title={Classification of higher dimensional algebraic varieties},
   series={Oberwolfach Seminars},
   volume={41},
   publisher={Birkh\"{a}user Verlag, Basel},
   date={2010},
   pages={x+208},
   isbn={978-3-0346-0289-1},
   review={\MR{2675555}},
   doi={10.1007/978-3-0346-0290-7},
}
%
%\bib{HMX14}{article}{
%   author={Hacon, Christopher D.},
%   author={McKernan, James},
%   author={Xu, Chenyang},
%   title={ACC for log canonical thresholds},
%   journal={Ann. of Math. (2)},
%   volume={180},
%   date={2014},
%   number={2},
%   pages={523--571},
%   issn={0003-486X},
%   review={\MR{3224718}},
%   doi={10.4007/annals.2014.180.2.3},
%}
%
%
%\bib{HLS18}{misc}{
%  author = {Han, Jingjun},	
%  author = {Liu, Jihao},
%  author = {Shokurov, Vyacheslav V.},
%  title={Boundedness of Singularities admitting an $\epsilon$-PLT blow-up},
%  year = {2018},
%  note = {Work in progress},
%}

\bib{HX15}{article}{
   author={Hacon, Christopher D.},
   author={Xu, Chenyang},
   title={Boundedness of log Calabi-Yau pairs of Fano type},
   journal={Math. Res. Lett.},
   volume={22},
   date={2015},
   number={6},
   pages={1699--1716},
   issn={1073-2780},
   review={\MR{3507257}},
   doi={10.4310/MRL.2015.v22.n6.a8},
}

%
%\bib{Ish00}{article}{
%   author={Ishii, Shihoko},
%   title={The quotients of log-canonical singularities by finite groups},
%   conference={
%      title={Singularities---Sapporo 1998},
%   },
%   book={
%      series={Adv. Stud. Pure Math.},
%      volume={29},
%      publisher={Kinokuniya, Tokyo},
%   },
%   date={2000},
%   pages={135--161},
%   review={\MR{1819634}},
%}
%	
%\bib{IP01}{article}{
%   author={Ishii, Shihoko},
%   author={Prokhorov, Yuri},
%   title={Hypersurface exceptional singularities},
%   journal={Internat. J. Math.},
%   volume={12},
%   date={2001},
%   number={6},
%   pages={661--687},
%   issn={0129-167X},
%   review={\MR{1875648}},
%   doi={10.1142/S0129167X0100099X},
%}
%
	
\bib{Kol11}{misc}{
  author = {Koll\'ar, J\'anos},
  title={New examples of terminal and log canonical singularities},
  year = {2016},
  note = {https://arxiv.org/abs/1107.2864},
}

\bib{Kol13}{book}{
   author={Koll\'{a}r, J\'{a}nos},
   title={Singularities of the minimal model program},
   series={Cambridge Tracts in Mathematics},
   volume={200},
   note={With a collaboration of S\'{a}ndor Kov\'{a}cs},
   publisher={Cambridge University Press, Cambridge},
   date={2013},
   pages={x+370},
   isbn={978-1-107-03534-8},
   review={\MR{3057950}},
   doi={10.1017/CBO9781139547895},
}

%
%\bib{KK13}{article}{
%   author={Koll\'{a}r, J\'{a}nos},
%   author={Kov\'{a}cs, S\'{a}ndor J.},
%   title={Log canonical singularities are Du Bois},
%   journal={J. Amer. Math. Soc.},
%   volume={23},
%   date={2010},
%   number={3},
%   pages={791--813},
%   issn={0894-0347},
%   review={\MR{2629988}},
%   doi={10.1090/S0894-0347-10-00663-6},
%}

\bib{KM98}{book}{
   author={Koll\'{a}r, J\'{a}nos},
   author={Mori, Shigefumi},
   title={Birational geometry of algebraic varieties},
   series={Cambridge Tracts in Mathematics},
   volume={134},
   note={With the collaboration of C. H. Clemens and A. Corti;
   Translated from the 1998 Japanese original},
   publisher={Cambridge University Press, Cambridge},
   date={1998},
   pages={viii+254},
   isbn={0-521-63277-3},
   review={\MR{1658959}},
   doi={10.1017/CBO9780511662560},
}
%
%\bib{KMM87}{article}{
%   author={Kawamata, Yujiro},
%   author={Matsuda, Katsumi},
%   author={Matsuki, Kenji},
%   title={Introduction to the minimal model problem},
%   conference={
%      title={Algebraic geometry, Sendai, 1985},
%   },
%   book={
%      series={Adv. Stud. Pure Math.},
%      volume={10},
%      publisher={North-Holland, Amsterdam},
%   },
%   date={1987},
%   pages={283--360},
%   review={\MR{946243}},
%}
%		
%\bib{KX16}{article}{
%   author={Koll\'{a}r, J\'{a}nos},
%   author={Xu, Chenyang},
%   title={The dual complex of Calabi-Yau pairs},
%   journal={Invent. Math.},
%   volume={205},
%   date={2016},
%   number={3},
%   pages={527--557},
%   issn={0020-9910},
%   review={\MR{3539921}},
%   doi={10.1007/s00222-015-0640-6},
%}
%
%\bib{Li17}{article}{
%   author={Li, Chi},
%   title={K-semistability is equivariant volume minimization},
%   journal={Duke Math. J.},
%   volume={166},
%   date={2017},
%   number={16},
%   pages={3147--3218},
%   issn={0012-7094},
%   review={\MR{3715806}},
%   doi={10.1215/00127094-2017-0026},
%}

\bib{Li17}{article}{
   author={Li, Chi},
   title={K-semistability is equivariant volume minimization},
   journal={Duke Math. J.},
   volume={166},
   date={2017},
   number={16},
   pages={3147--3218},
   issn={0012-7094},
   review={\MR{3715806}},
   doi={10.1215/00127094-2017-0026},
}

\bib{LS13}{article}{
   author={Liendo, Alvaro},
   author={S\"{u}ss, Hendrik},
   title={Normal singularities with torus actions},
   journal={Tohoku Math. J. (2)},
   volume={65},
   date={2013},
   number={1},
   pages={105--130},
   issn={0040-8735},
   review={\MR{3049643}},
   doi={10.2748/tmj/1365452628},
}

\bib{LX16}{misc}{
  author ={Li, Chi}
  author = {Xu, Chenyang},
  title={Stability of Valuations and Kollár Components},
  year = {2016},
  note = {https://arxiv.org/abs/1604.05398},
}
		
\bib{LX17}{misc}{
  author ={Li, Chi}
  author = {Xu, Chenyang},
  title={Stability of Valuations: Higher Rational Rank},
  year = {2017},
  note = {https://arxiv.org/abs/1707.05561},
}

\bib{Mor18}{misc}{
  author = {Moraga, Joaqu\'in},
  title={On minimal log discrepancies and Koll\'ar components},
  year = {2018},
  note = {https://arxiv.org/abs/1810.10137},
}

%\bib{MP99}{article}{
%   author={Markushevich, D.},
%   author={Prokhorov, Yu. G.},
%   title={Exceptional quotient singularities},
%   journal={Amer. J. Math.},
%   volume={121},
%   date={1999},
%   number={6},
%   pages={1179--1189},
%   issn={0002-9327},
%   review={\MR{1719826}},
%}
%
%\bib{PS01}{article}{
%   author={Prokhorov, Yu. G.},
%   author={Shokurov, V. V.},
%   title={The first fundamental theorem on complements: from global to
%   local},
%   language={Russian, with Russian summary},
%   journal={Izv. Ross. Akad. Nauk Ser. Mat.},
%   volume={65},
%   date={2001},
%   number={6},
%   pages={99--128},
%   issn={1607-0046},
%   translation={
%      journal={Izv. Math.},
%      volume={65},
%      date={2001},
%      number={6},
%      pages={1169--1196},
%      issn={1064-5632},
%   },
%   review={\MR{1892905}},
%   doi={10.1070/IM2001v065n06ABEH000366},
%}
%
%\bib{PS09}{article}{
%   author={Prokhorov, Yu. G.},
%   author={Shokurov, V. V.},
%   title={Towards the second main theorem on complements},
%   journal={J. Algebraic Geom.},
%   volume={18},
%   date={2009},
%   number={1},
%   pages={151--199},
%   issn={1056-3911},
%   review={\MR{2448282}},
%   doi={10.1090/S1056-3911-08-00498-0},
%}
%

\bib{PS11}{article}{
   author={Petersen, Lars},
   author={S\"{u}ss, Hendrik},
   title={Torus invariant divisors},
   journal={Israel J. Math.},
   volume={182},
   date={2011},
   pages={481--504},
   issn={0021-2172},
   review={\MR{2783981}},
   doi={10.1007/s11856-011-0039-z},
}

%\bib{Sho92}{article}{
%   author={Shokurov, V. V.},
%   title={A supplement to: ``Three-dimensional log perestroikas'' [Izv.
%   Ross. Akad. Nauk Ser. Mat. {\bf 56} (1992), no. 1, 105--203; MR1162635
%   (93j:14012)]},
%   language={Russian},
%   journal={Izv. Ross. Akad. Nauk Ser. Mat.},
%   volume={57},
%   date={1993},
%   number={6},
%   pages={141--175},
%   issn={1607-0046},
%   translation={
%      journal={Russian Acad. Sci. Izv. Math.},
%      volume={43},
%      date={1994},
%      number={3},
%      pages={527--558},
%      issn={1064-5632},
%   },
%   review={\MR{1256571}},
%   doi={10.1070/IM1994v043n03ABEH001579},
%}
%
%\bib{Sho96}{article}{
%   author={Shokurov, V. V.},
%   title={$3$-fold log models},
%   note={Algebraic geometry, 4},
%   journal={J. Math. Sci.},
%   volume={81},
%   date={1996},
%   number={3},
%   pages={2667--2699},
%   issn={1072-3374},
%   review={\MR{1420223}},
%   doi={10.1007/BF02362335},
%}
%	
%
%\bib{Sho00}{article}{
%   author={Shokurov, V. V.},
%   title={Complements on surfaces},
%   note={Algebraic geometry, 10},
%   journal={J. Math. Sci. (New York)},
%   volume={102},
%   date={2000},
%   number={2},
%   pages={3876--3932},
%   issn={1072-3374},
%   review={\MR{1794169}},
%   doi={10.1007/BF02984106},
%}

\bib{Sho04}{article}{
   author={Shokurov, V. V.},
   title={Letters of a bi-rationalist. V. Minimal log discrepancies and
   termination of log flips},
   language={Russian, with Russian summary},
   journal={Tr. Mat. Inst. Steklova},
   volume={246},
   date={2004},
   number={Algebr. Geom. Metody, Svyazi i Prilozh.},
   pages={328--351},
   issn={0371-9685},
   translation={
      journal={Proc. Steklov Inst. Math.},
      date={2004},
      number={3(246)},
      pages={315--336},
      issn={0081-5438},
   },
   review={\MR{2101303}},
}

\bib{Tei99}{article}{
   author={Teissier, Bernard},
   title={Valuations, deformations, and toric geometry},
   conference={
      title={Valuation theory and its applications, Vol. II},
      address={Saskatoon, SK},
      date={1999},
   },
   book={
      series={Fields Inst. Commun.},
      volume={33},
      publisher={Amer. Math. Soc., Providence, RI},
   },
   date={2003},
   pages={361--459},
   review={\MR{2018565}},
}

\bib{Tzi05}{article}{
   author={Tziolas, Nikolaos},
   title={Three dimensional divisorial extremal neighborhoods},
   journal={Math. Ann.},
   volume={333},
   date={2005},
   number={2},
   pages={315--354},
   issn={0025-5831},
   review={\MR{2195118}},
   doi={10.1007/s00208-005-0676-9},
}

\bib{TX17}{article}{
   author={Tian, Zhiyu},
   author={Xu, Chenyang},
   title={Finiteness of fundamental groups},
   journal={Compos. Math.},
   volume={153},
   date={2017},
   number={2},
   pages={257--273},
   issn={0010-437X},
   review={\MR{3604863}},
   doi={10.1112/S0010437X16007867},
}

\bib{Wat81}{article}{
   author={Watanabe, Keiichi},
   title={Some remarks concerning Demazure's construction of normal graded
   rings},
   journal={Nagoya Math. J.},
   volume={83},
   date={1981},
   pages={203--211},
   issn={0027-7630},
   review={\MR{632654}},
}

\bib{Xu14}{article}{
   author={Xu, Chenyang},
   title={Finiteness of algebraic fundamental groups},
   journal={Compos. Math.},
   volume={150},
   date={2014},
   number={3},
   pages={409--414},
   issn={0010-437X},
   review={\MR{3187625}},
   doi={10.1112/S0010437X13007562},
}

\bib{Xu17}{misc}{
  author = {Xu, Chenyang},
  title={Interaction Between Singularity Theory and the Minimal Model Program},
  year = {2017},
  note = {https://arxiv.org/abs/1712.01041},
}

%\bib{Zar39}{article}{
%   author={Zariski, Oscar},
%   title={The reduction of the singularities of an algebraic surface},
%   journal={Ann. of Math. (2)},
%   volume={40},
%   date={1939},
%   pages={639--689},
%   issn={0003-486X},
%   review={\MR{0000159}},
%   doi={10.2307/1968949},
%}

\end{biblist}
\end{bibdiv}

\end{document}